\newcommand{\ACCalgo}{\texttt{C2Opt}}
\newcommand{\DMDc}{\texttt{DMDc}}
\newtheorem{corr}{Corollary}
\newtheorem{thm}{Theorem}
\newtheorem{defn}{Definition}
\newtheorem{lem}{Lemma}
\newtheorem{rem}{Remark}
\newtheorem{assum}{Assumption}
\newtheorem{prop}{Proposition}
\newtheorem{prob}{Problem}
\newcommand{\Nint}[1]{[#1]}
\newcommand{\diam}{\mathrm{diam}}
\newcommand{\datapoints}{(x_t,u_t,x_{t+1},C(z_t, u_t),
    \nabla C(z_t, u_t))}
\newcommand{\datapointsi}{(x_i,u_i,x_{i+1},C(z_i, u_i),
    \nabla C(z_i, u_i))}
\newcommand{\datapointsN}{{\{\datapoints\}}_{t=0}^{N-1}}
\newcommand{\dataSet}[1]{\mathscr{D}_{#1}}
\newcommand{\ProbInst}[2]{$\operatorname{ProbInst}(#1,F,C,
#2)$}
\newcommand{\ProbInstGen}{\ProbInst{\Phi}{\dataSet{N}}}
\newcommand{\IdentityMap}{\mathbb{I}}
\newcommand{\ProbInstZero}{\ProbInst{\IdentityMap}{\emptyset}}
\newcommand{\mt}{m_{t,\alpha,\beta}}
\newcommand{\mtprime}{m_{t,\alpha',\beta'}}
\newcommand{\GCmax}{{\|\nabla C\|}_{\max}}
\newcommand{\GapC}{C^\mathrm{gap}_{t-1}}
\newcommand{\dist}{\mathrm{dist}}
\let\Gin@viewport@code\Gin@trim\expandafter\Gread@parse@vp#1 \\}
\title{On-the-fly control of unknown nonlinear systems with sublinear regret}
\author{Abraham P. Vinod, Arie Israel, and Ufuk 
    Topcu
    \thanks{This material is based on work partly
        supported by the DARPA Assured Autonomy Program, Air Force Office of Scientific Research
        (FA9550-19-1-0005) and National Aeronautics and Space
        Administration (80NSSC19K0209).\newline
        \indent A. Vinod is with Mitsubishi Electric Research Laboratories (MERL), 02139 Cambridge, MA, USA; e-mail:
abraham.p.vinod@ieee.org.\newline
        \indent A.  Israel is with the Department of
        Mathematics, and U. Topcu is with the Department of
        Aerospace Engineering and Engineering Mechanics at the University of Texas at Austin,
    Austin, TX, 78712 USA; email: arie@math.utexas.edu,
utopcu@utexas.edu}
}
\date{}
\begin{document}
\maketitle

\begin{abstract}
We study the problem of data-driven, constrained control of
unknown nonlinear dynamics from a single ongoing and finite-horizon trajectory. We consider a one-step optimal control problem with a smooth, black-box objective, typically a composition of a known cost function and the unknown dynamics. We investigate an on-the-fly control paradigm, i.e., at each time step, the evolution of the dynamics and the first-order information of the cost are provided only for the executed control action. We propose an optimization-based control algorithm that iteratively minimizes a data-driven surrogate function for the unknown objective. We prove that the proposed approach incurs sublinear cumulative regret (step-wise suboptimality with respect to an optimal one-step controller) and is worst-case optimal among a broad class of data-driven control algorithms. We also present tractable reformulations of the approach that can leverage off-the-shelf solvers for efficient implementations.
\end{abstract}

\maketitle

\section{Introduction}

Optimal control of unknown dynamical systems using limited data and feedback is necessary for recovery after a catastrophic failure in physical systems. 
We focus on an extreme case of data-driven control, where
data from only a single finite-horizon trajectory is
available for control decision. Additionally, we consider
the setting of \emph{on-the-fly control} or \emph{bandit
feedback}~\cite{ornik2019myopic,djeumou2020fly,krause2011contextual},
where feedback regarding the optimal control problem is provided only at the executed state-action pair at each time step. 
The effect of all other admissible
actions at the current state remains unknown. An example of
these challenging restrictions occurs when an aircraft suffers from catastrophic damage mid-flight causing unknown alterations to its dynamics, and we only have limited data and feedback to land the aircraft using on-the-fly control.

We focus on the one-step optimal control problem due to the
lack of a predictive model. 
We consider a sequential decision-making problem where the
controller must minimize a one-step cost function under
constraints at each time step. The cost function
assigns preferences over the next state and is \emph{a
priori unknown} since the dynamics of the system are
unknown. 

We focus on the case in which the cost function has a
\emph{Lipschitz continuous gradient}. 
In this setting, we can reason about the cost
function at a previously unseen state-action pair using the 
available data and the 
smoothness assumption.  This paper \emph{utilizes 
optimization to design a data-driven control algorithm for the one-step optimal control problem, and
provide rigorous bounds on the suboptimality of the
proposed approach}.

One approach to solving sequential decision-making problems is
via iterative minimization of a data-driven model of the
unknown one-step cost function (objective). In the machine
learning community, such approaches have been studied
under the umbrella of \emph{contextual
optimization}~\cite{krause2011contextual,chowdhury2017ICML,
valko2013finite,berkenkamp2016safe,bogunovic2020corruption}.
These approaches utilize Gaussian processes or reproducing
kernel Hilbert spaces to model the unknown objective using
data.
The decision at each iteration is then made via the
optimization of a surrogate function of the control action
(to be optimized at each time step) and the current state
(known at each time step). 
While the proposed approach follows a similar approach of
minimization of a specific data-driven surrogate function,
it overcomes three main challenges suffered by existing approaches to contextual optimization.

First, surrogate functions proposed by existing approaches
for contextual optimization are non-convex. Therefore,
existing approaches typically rely on generic nonlinear optimization solvers or
discretization-based approaches to (approximately)
minimize the surrogate
functions~\cite{krause2011contextual,berkenkamp2016safe,bogunovic2020corruption}. The local optimization-based approaches
invalidate the theoretical bounds on the suboptimality of
the control decisions
available for such approaches which assume that the
global optima of the surrogate functions are found at each
iteration, while the discretization-based approaches
apply only to low dimensions (typically up to
three-dimensional problems). In contrast, the proposed
approach can be implemented exactly via concave quadratic
programming, is amenable to convexification that is
lossless in certain cases, and does not
have a gap between theory and practice in terms of the
suboptimality of the control decisions. 

Second, existing approaches
for contextual optimization do
not easily accommodate side information of the unknown
objective function like convexity and monotonicity of the
unknown cost function when available.  
Incorporation of such high-level information can
significantly reduce the amount of data required by the control
algorithm by improving the accuracy of interpolation of the one-step
cost function~\cite{vinod2020convexified,djeumou2020fly,ornik2019myopic,ahmadi2020learning,ahmadi2020safe}.
The structure of the surrogate function used by the proposed
approach admits such side information. We
additionally demonstrate that side information can recover
parts of the unknown cost function exactly in certain cases,
even when the available data is severely limited.

Third, existing approaches rely
on computationally intensive hyperparameter tuning for
building accurate data-driven models of the unknown
objective~\cite{krause2011contextual,chowdhury2017ICML,
valko2013finite}. In contrast, the
proposed approach has only a single hyperparameter with
intuitive connections to the underlying one-step control
problem.

We also prove that the proposed approach is worst-case
optimal among the class of all sequential optimization-based
approaches that can tackle the one-step optimal control
problem.
We utilize the notion of \emph{average regret} to study
the optimality of the proposed
algorithm, inspired by~\cite{krause2011contextual}. Roughly speaking,
the \emph{average regret} is the 
suboptimality of the proposed algorithm with respect to the
\emph{a priori unknown} optimal one-step controller
averaged over the entire duration of the application of
control.  We
characterize an upper bound on the average 
regret for the proposed approach, and prove that
it matches an algorithm-independent lower bound on the
average cumulative regret. Additionally, these bounds show
that the proposed approach enjoys a sublinear cumulative
regret, similar to the contextual Gaussian process-lower
confidence bound (\texttt{CGP-LCB}) algorithm 
~\cite{krause2011contextual}.  Sublinear
cumulative regret demonstrates that the designed
controller competes effectively with the unknown optimal
mapping from states to actions. 

For the data-driven control problem of interest, several
approaches have also been proposed in the literature that
does not directly invoke contextual
optimization. Instead, they combine system identification with model predictive
control.  In~\cite{korda2018linear}, the authors use
the Koopman
theory to lift the unknown nonlinear dynamics to a higher
dimensional space where linear system identification is
performed. \texttt{SINDYc}~\cite{kaiser2018sparse} utilizes
a sparse regression over a library of nonlinear functions
for nonlinear system identification.
\texttt{DMDc}~\cite{proctor2016dynamic,korda2018linear} uses the spectral
properties of the collected data to obtain approximate
linear models. 
However, such approaches are expected to perform poorly with nonlinear systems under severe data limitations. 
Alternatively, adaptive
control literature of nonlinear systems uses model inference
to arrive at the dynamics with relatively low data~\cite{
chowdhary2014bayesian, calliess2014conservative}. However,
these approaches are typically computationally expensive and do not accommodate side
information easily. Recent research has provided empirical
evidence to the utility of side information when data is
severely
limited~\cite{vinod2020convexified,djeumou2020fly,ornik2019myopic,ahmadi2020learning,ahmadi2020safe}.
In their current form, none of the above
approaches have the combined advantages of the proposed
approach --- the ability to accommodate severe data scarcity
and additional (side) information like convexity and
monotonicity of the optimal control problem, the knowledge
of the gradients, the non-asymptotic, worst-case
optimality guarantees on regret, and tractable
implementations using off-the-shelf solvers.  

The main contributions of this paper are 1) a data-driven
and optimization-based approach for constrained on-the-fly
control of a system with unknown nonlinear dynamics from a
single finite-horizon trajectory, 2) an upper bound
on the incurred average regret for the proposed algorithm
as a function of the number of control time steps that
implies sublinear cumulative regret, and 3) a
matching, algorithm-independent lower bound on the average
regret (matches up to constants).  We
show that the proposed approach can be implemented using
convex or concave quadratic minimization programs, by
exploiting its connections with difference-of-convex
programming~\cite{an2005dc}. We demonstrate the efficacy of
the proposed approach in two nonlinear data-driven control
problems: drive a unicycle to a target location under
control constraints using only observed data, and land a
damaged aircraft.

\section{Preliminaries and problem statements}

Let $\Nint{t}$ denote the finite set of natural numbers up to $t\in
\mathbb{N}$, $ \mathcal{G}^N$ denote the Cartesian product
of a set $ \mathcal{G}$ with itself $N\in \mathbb{N}$ times,
$| \mathcal{G}|$ denote the cardinality of a set $
\mathcal{G}$, and $\|\cdot\|$
denotes the standard Euclidean norm.  Given $J :
\mathbb{R}^d \to \mathbb{R}$, $\ell(x;J,q) \triangleq J(q) + \nabla J(q)
\cdot (x - q)$ denotes its linear
approximant about a data point $(q, J(q))\in \mathbb{R}^d
\times \mathbb{R}$. 

\subsection{Smooth functions}

Given a set $ \mathcal{S}\subset \mathbb{R}^n$, a continuously differentiable function $f: \mathcal{S} \to \mathbb{R}$ has a \emph{Lipschitz continuous gradient}, if its gradient $\nabla f$ satisfies the property $\|\nabla f(y) - \nabla f(x)\|\leq L_f\|y - x\|$ for every $x,y \in \mathcal{S}$ and some $L_f > 0$~\cite{nesterov2018lectures}. Here, we refer to $L_f$ as the \emph{Lipschitz gradient constant}. We denote the smallest Lipschitz gradient constant as $K_f$, although $K_f$ is rarely known. For brevity, we will refer to functions with Lipschitz continuous gradient as \emph{smooth functions}.

For any $s\in \mathcal{S}$ and data $\left(q_i,f(q_i),\nabla f(q_i)\right)_{i\in\Nint{t}}$ obtained at points $q_i \in \mathcal{S}$, we have the following data-driven minorant $f_t^-: \mathcal{S} \to \mathbb{R}$ and majorant $f_t^+: \mathcal{S} \to \mathbb{R}$,
\begin{align}
    f_t^+(s)&\triangleq\min\limits_{i\in
        \Nint{t}}\left({\ell(s;f,q_i)+\frac{L_f}{2} \| s -
    q_i\|^2}\right)\label{eq:C_plus_defn},\\ 
    f_t^-(s)&\triangleq\max\limits_{i\in \Nint{t}}\left({\ell(s;f,q_i)-\frac{L_f}{2} \| s -
    q_i\|^2}\right)\label{eq:C_minus_defn}.
\end{align}
\begin{lem}[\textsc{Data-driven approximants for $f$}]\label{lem:approx}
For any $s\in \mathcal{S}$, $f_t^-(s)\leq f(s)\leq f_t^+(s)$, and the approximation error $f(s) - f_t^-(s)$ and $f_t^+(s) - f(s)$ are bounded from below by zero and bounded from above by $L_f \min_{i\in\Nint{t}}\|s - q_i\|^2$.
\end{lem}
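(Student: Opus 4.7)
The plan is to derive the sandwich $f_t^-(s)\le f(s)\le f_t^+(s)$ from the standard quadratic upper/lower bounds that follow from an $L_f$-Lipschitz gradient, and then to read off the error bounds by substitution.

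First, I would invoke the well-known consequence of having a Lipschitz continuous gradient (sometimes called the descent lemma): for any $x,y\in\mathcal{S}$,
\[
\bigl|f(y)-\ell(y;f,x)\bigr|\;\le\;\frac{L_f}{2}\|y-x\|^2,
\]
which is obtained by integrating $\nabla f$ along the segment from $x$ to $y$ and applying Cauchy--Schwarz together with the Lipschitz bound on $\nabla f$. Specialising to $x=q_i$ and $y=s$ for each $i\in\Nint{t}$ gives the two pointwise inequalities
\[
\ell(s;f,q_i)-\tfrac{L_f}{2}\|s-q_i\|^2 \;\le\; f(s) \;\le\; \ell(s;f,q_i)+\tfrac{L_f}{2}\|s-q_i\|^2.
\]

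Next I would take the max of the left-hand side over $i\in\Nint{t}$ and the min of the right-hand side over $i\in\Nint{t}$. Since $f(s)$ does not depend on $i$, these operations preserve the inequalities and, by the definitions \eqref{eq:C_plus_defn}--\eqref{eq:C_minus_defn}, yield exactly $f_t^-(s)\le f(s)\le f_t^+(s)$. This also immediately gives the lower bound of zero on both approximation errors.

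For the upper bound on the approximation error, I would use the identity $f(s)-\max_i A_i = \min_i\bigl(f(s)-A_i\bigr)$ (and analogously for $f_t^+$). For each $i$, the difference $f(s)-[\ell(s;f,q_i)-\frac{L_f}{2}\|s-q_i\|^2]$ equals $[f(s)-\ell(s;f,q_i)]+\frac{L_f}{2}\|s-q_i\|^2$, and the first bracket is at most $\frac{L_f}{2}\|s-q_i\|^2$ by the descent lemma; summing the two halves gives $L_f\|s-q_i\|^2$. Taking the minimum over $i$ then yields $f(s)-f_t^-(s)\le L_f\min_{i\in\Nint{t}}\|s-q_i\|^2$. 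The bound on $f_t^+(s)-f(s)$ follows by the symmetric argument.

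I do not anticipate a real obstacle: the entire proof is a direct consequence of the descent lemma together with the min/max structure of $f_t^\pm$. The only mild subtlety is the factor $L_f$ (rather than $L_f/2$) in the error bound, which appears precisely because the error combines the one-sided Lipschitz slack $\frac{L_f}{2}\|s-q_i\|^2$ with the quadratic penalty of the same magnitude built into the definition of $f_t^\pm$.
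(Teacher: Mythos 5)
Your proof is correct and follows essentially the same route as the paper's: both derive the pointwise quadratic bounds from the Lipschitz-gradient (descent lemma) inequality, obtain the sandwich by taking the finite min/max over the data points, and bound the errors by observing that $f-f_t^-$ and $f_t^+-f$ are each a minimum over $i$ of terms bounded by $L_f\|s-q_i\|^2$. No gaps.
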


We provide the proof of Lemma~\ref{lem:approx} in Appendix~\ref{app:lem_approx}.

\subsection{Data-driven, on-the-fly control of nonlinear systems}

Consider a discrete-time nonlinear dynamical system
\begin{align}
    x_{t+1} = F(x_t, u_t) \label{eq:nonlin_sys},
\end{align}
with state $x_t\in \mathcal{X}\subseteq
\mathbb{R}^{n_{\mathcal{X}}}$ and control input $u_t \in
\mathcal{U}\subset \mathbb{R}^{n_{\mathcal{U}}}$.  The
nonlinear dynamics $F: \mathcal{X}\times \mathcal{U} \to
\mathcal{X}$ are unknown, but the sets $\mathcal{X}$ and
$\mathcal{U}$ are known, convex, and compact. 
\begin{assum}[\textsc{Full state feedback}]\label{assum:nonoise}
    At any time instant $t\in \mathbb{N}$, given the current
    state $x_t$ and the applied control $u_t$, we know the
    next state $x_{t+1}$. 
\end{assum}

Next, we introduce contexts to simplify the process of
learning nonlinear functions from data. 
\begin{defn}
    We define context as a known nonlinear transformation of
    the state $z=\Phi(x)$, with the context map
    $\Phi: \mathbb{R}^{n_{\mathcal{X}}}\to
    \mathbb{R}^{n_{\mathcal{Z}}}$ for some
    $n_{\mathcal{Z}}\in \mathbb{N}$, and
    $n_{\mathcal{Z}}\geq n_{ \mathcal{X}}$. We denote the
    context space using $ \mathcal{Z}=\Phi( \mathcal{X})$,
    and assume that the set $ \mathcal{Z}$ is
    compact.\label{defn:context}
\end{defn}
We will utilize contexts to enforce available side
information about the unknown dynamics. For example,
consider a robot with a three-dimensoinal state consisting
of its end effector position $(p_x,p_y)\in \mathbb{R}^2$ and
a joint angle $\theta\in[0, 2\pi)$, and we wish to
learn a real-valued function on the state
$f:\mathbb{R}^2\times[0, 2\pi) \to \mathbb{R}$. To enforce
periodicity of $f$ in $\theta$, it suffices to learn a
function $f_\Phi: \mathbb{R}^4 \to \mathbb{R}$ such that
$f(x)=f_\Phi(\Phi(x))$ with
$\Phi(x)=(p_x,p_y,\sin(\theta),\cos(\theta))$. Moreover,
$f_\Phi$ has a Euclidean domain, which admits interpolation
of data. Such nonlinear transformations have been utilized
heavily in machine learning and system identification
communities~\cite{kochenderfer2019algorithms,korda2018linear,kaiser2018sparse}.

\begin{rem}[\textsc{Relaxation of
    Assumption~\ref{assum:nonoise}}]
    In certain problems, the context map $\Phi$ may
    depend on only a subspace of the state space. Then, we can
    relax Assumption~\ref{assum:nonoise} to partial
    state-feedback, where we observe  only the states that
    are relevant for the definition of the context.
\end{rem}

Next, we define an unknown one-step cost function $C$ as a
function of the current context and the applied control.
Formally, we have $C: \mathcal{Z} \times \mathcal{U} \to
\mathbb{R}$. Of special interest is $C$ of the form,
\begin{align}
    C(z_t,u_t)=c(F(\Phi^{-1}(z_t),u_t))\label{eq:one_step_ex}
\end{align}
where $c:\mathcal{X}\to\mathbb{R}$ is a known preference over the next state $x_{t+1}$ at each time instant $t$ and $\Phi^{-1}: \mathcal{Z}\to \mathcal{X}$ is the inverse map of the context map $\Phi$. 

\begin{assum}[\textsc{On-the-fly
    restriction}]\label{assum:onthefly}
    At any time instant $t\in \mathbb{N}$, given the current
    state $x_t$ and the applied control $u_t$, an oracle
    provides the first-order information $(C(z_t,u_t),\nabla
    C(z_t,u_t))$ about $C$ only for the current context
    $z_t$ and control $u_t$.
\end{assum}
The on-the-fly restriction presented in
Assumption~\ref{assum:onthefly} is ubiquitous in realistic
applications. This restriction is also known as bandit
feedback in the contextual optimization
literature~\cite{krause2011contextual}.  Specifically, we
can not evaluate the effect of the control choice $u_t'\neq
u_t$ at the context $z_t$ and time step $t$, once we have
decided to apply the control action $u_t$.  

Assumption~\ref{assum:onthefly} also assumes availability of
exact measurements of $\nabla C$. For simple one-step cost
functions like the relative distance to an object, one can
measure $\nabla C$ (relative velocity) via sensors like the
Doppler radar. However, more complex $C$ may not admit
measurements of $\nabla C$. For dynamics obtained from
continuous-time dynamics, one can also measure $\nabla C$
from perturbations in control~\cite{ornik2019myopic}. Our
numerical examples show that a noisy estimate of the
gradients from finite differences is also sufficient for the
proposed approach.  Our future work will investigate the
impact of noisy oracle on the proposed approach.

For some $N\in \mathbb{N}$, let the finite set of tuples
$\dataSet{N}=\datapointsN$ 
be the initial available data. The data set $\dataSet{N}$  
corresponds to a single
finite-horizon trajectory of length $N$ for the dynamics 
$x_{t+1}=F(x_t,u_t)$ and context map $\Phi$.  
We wish to solve
\eqref{prob:OC_P}, without the explicit knowledge of the
function $C$ (or the nonlinear dynamics $F$):
\begin{align}
    {\mathrm{minimize}}_{u_t\in \mathcal{U}}&\quad
    C(z_t,
    u_t)\qquad\forall t\in \Nint{T},\ t\geq N,\label{prob:OC_P}
\end{align}
under Assumptions~\ref{assum:nonoise}
and~\ref{assum:onthefly}. 

The optimization problem \eqref{prob:OC_P} can be viewed as
an instantiation of nonlinear model predictive control
approach, a standard approach to constrained optimal
control, with a planning horizon of one. The extreme choice
of the planning horizon arises from the fact that the
dynamics $F$ are unknown in the problem of interest.
Existing data-driven model predictive control approaches
that seek to learn the dynamics before control fail to
identify a model or suffer from overfitting in the severe
low-data setting~\cite{kocijan2004gaussian, korda2018linear,
kaiser2018sparse}.  On the other hand, one can use control
augmentation to cast the optimization problem encountered in
a general nonlinear model predictive control formulation  in
the form of \eqref{prob:OC_P}.  However, such an extension
will require access to an accurate look-ahead simulator that
provides $(C(z_t, u_t, u_{t+1}, ...), \nabla C(z_t, u_t,
u_{t+1}, ...))$, which may be hard to obtain in practice.

Equation \eqref{prob:OC_P} can also be viewed in the lens of
reinforcement learning, where $C$ is defined as the
$Q$-function~\cite{bertsekas2019reinforcement}. However,
$Q$-functions are not known \emph{a priori}, and typically
require data-intensive estimations to construct an oracle
that satisfies Assumption~\ref{assum:onthefly}.

We will assume that $C$ is smooth in order to
facilitate interpolation between data points when solving
\eqref{prob:OC_P}.
\begin{assum}[\textsc{$C$ is smooth}]\label{assum:C_Lip}
    $C$ has a Lipschitz continuous gradient, and a known
    Lipschitz gradient constant $L_C$.
\end{assum}
The Lipschitz gradient constant $L_C$ in
Assumption~\ref{assum:C_Lip} restricts the class of dynamics
and one-step cost functions considered, when interpolating
the given data.

We now state the first problem tackled by this paper.
\begin{prob}\label{prob_st:oc_p}
    Using the initial data information $\dataSet{N}$, access
    to full-state feedback, a first-order oracle for $C$, a
    context map $\Phi$, and the knowledge of $L_C$, $
    \mathcal{X}$, and $ \mathcal{U}$, design a control
    algorithm $ \mathscr{A}$ that produces $u_t$ which
    (approximately) solves \eqref{prob:OC_P} at each time
    step $t\in \Nint{T}, t\geq N$. 
\end{prob}
\begin{rem}[\textsc{Problem instance, oracle, \& algorithm}]
    We characterize an instance of the one-step optimal
    control problem by a tuple \ProbInstGen{}.  At each time
    step, an oracle that satisfies
    Assumption~\ref{assum:onthefly} returns the next state
    $x_t$ and the first-order information about the cost
    $(C(z_t,u_t), \nabla C(z_t,u_t))$ based on some fixed
    $\Phi$, $F$, and $C$. On the other hand, an algorithm
    that solves Problem~\ref{prob_st:oc_p} is unaware of the
    dynamics $F$ or the cost $C$.
\end{rem}

A drawback of short horizon-based control as given in
Problem~\ref{prob_st:oc_p} is the lack of foresight. Since
the control actions are chosen greedily to minimize $C$, we
may miss on highly performing action sequences that are
spread across multiple time steps. On the other hand, this
restriction greatly improves the tractability of the
problem~\cite{kocijan2004gaussian,
ornik2019myopic,djeumou2020fly}.  Tractable and scalable
solutions to \eqref{prob:OC_P} can produce effective
data-driven control algorithms.

We also emphasize that \eqref{prob:OC_P} is significantly
more challenging that a general black-box optimization
problem typically considered in the Bayesian optimization
and global optimization
communities~\cite{vinod2020constrained,horst2000introduction,pardalos2010deterministic,kochenderfer2019algorithms}.
In black-box optimization, we seek sequential optimization
algorithms that can converge to the optimal solution of
$\min_{s\in \mathcal{Z}\times \mathcal{U}} J(s)$ for some
unknown function $J: \mathcal{Z}\times\mathcal{U} \to
\mathbb{R}$. 
Due to the dynamics $F$, the optimal action of
\eqref{prob:OC_P} at any time instant depends on the current
context. In contrast, black-box optimization problems have a
time-invariant set of optimal values. Additionally, a
black-box optimization algorithm can query any part of the
feasible space $ \mathcal{Z}\times \mathcal{U}$, while an
algorithm that solves \eqref{prob:OC_P} is restricted to the
set $\{z_t\}\times \mathcal{U}$ at each time step $t$ due to
the on-the-fly restriction.

\subsection{Evaluation of the control algorithm via regret analysis}

We evaluate the performance of the proposed algorithm $
\mathscr{A}$ using the notion of \emph{regret}, inspired by
the notions of contextual regret~\cite{krause2011contextual}
and dynamic regret~\cite{mokhtari2016online}. 
\begin{defn}[\textsc{Regret for one-step control
    algorithm $\mathscr{A}$}]\label{defn:regret}
    At any time step $t\in \mathbb{N}$, the regret incurred by an algorithm $ \mathscr{A}$ that solves Problem~\ref{prob_st:oc_p} is 
    \begin{align}
        \rho_t(\mathscr{A}) = C(z_t, u_t) - C(z_t, u_t^\ast),
        \label{eq:instant_regret}
    \end{align}
    where $u_t$ is the control action selected by $
    \mathscr{A}$ given $z_t=\Phi(x_t)$, and $u_t^\ast$ is the \emph{a priori} unknown optimal solution to \eqref{prob:OC_P}.
\end{defn}
    
Regret accounts for the suboptimality associated with the
application of the control $u_t$ at the time step $t$,
instead of the \emph{a priori unknown, optimal} one-step
control $u_t^\ast$.  By construction, regret is
non-negative.

\begin{defn}[\textsc{Average regret
    for one-step control algorithm $\mathscr{A}$}]
    Given a maximum number of control time steps $T\in \mathbb{N}$ and initial
    trajectory length of $N \in \mathbb{N}$ with $N\leq T$,
    we define the average regret $R_T$
    incurred by the algorithm $ \mathscr{A}$ that solves Problem~\ref{prob_st:oc_p} as 
    \begin{align}
        R_T&=\frac{1}{T-N}\sum_{t=N}^{T}\rho_t\label{eq:R_T_defn}.
    \end{align}
\end{defn}

\begin{prob}\label{prob_st:oc_p_regret}
    Characterize an upper bound on the average regret for
    the proposed solution to Problem~\ref{prob_st:oc_p} with
    $N=0$. 
\end{prob}
\begin{prob}\label{prob_st:oc_p_regret_lb}
    Characterize an algorithm-independent, worst-case lower
    bound on the average regret that every
    one-step control algorithm $\mathscr{A}$ that solves
    Problem~\ref{prob_st:oc_p} must satisfy with $N=0$. 
\end{prob}
We show that the bounds developed to address  Problem~\ref{prob_st:oc_p_regret} and~\ref{prob_st:oc_p_regret_lb} match up to constant factors. Consequently, the proposed algorithm incurs worst-case optimal regret with respect to the number of control time steps $T$. We chose $N=0$ to focus on the performance of the one-step algorithm $\mathscr{A}$ as it interacts with a problem instance of \eqref{prob:OC_P} with no prior knowledge. We can expect the proposed worst-case regret bounds to improve when $N>0$.

\section{Main results}

\subsection{Data-driven control via optimization: Algorithm
details}

\begin{algorithm}[p] \caption{Data-driven control of smooth systems}
    \label{algo:cov_method}
    \begin{algorithmic}[1]
        \STATE \textbf{Input:} Convex \& compact sets $
        \mathcal{X}\subset \mathbb{R}^n$ and
        $\mathcal{U}\subset \mathbb{R}^m$, context map
        $\Phi$, first-order
        oracle for $C$, Lipschitz
        gradient constant $L_C$, time horizon $T$, initial
        data set $\dataSet{N}$ for some $N < T$,
        parameters $\alpha,\beta \in
        \mathbb{R}$ such that $\sqrt{\alpha^2+\beta^2}=1$
        \STATE \textbf{Output:} sequence of control actions
            $\left\{{u^\dagger_t}\right\}_{t={N+1}}^{T}$
        \STATE $t\gets N$ and $x_t\gets x_N$
        \WHILE{$t < T$} 
        \STATE Obtain the current context $z_t \gets
        \Phi(x_t)$
        \STATE Define $\mt(z_t, u_t)=\alpha C_{t-1}^+(z_t,
        u_t) +  \beta C_{t-1}^-(z_t, u_t)$
        \STATE Compute $u_{t}^\dagger$ to apply at the
        current state $x_t$ by solving\label{step:OneStep}
            \begin{align}
                    {\mathrm{minimize}}_{u_t}\ \mt(z_t,
                    u_t)\quad
                    \mathrm{subject\ to}\ u_t\in
                    \mathcal{U}.
                \label{prob:algo_opt_prob}
            \end{align}
            \STATE Obtain the cost information 
            $(C(z_{t},u_t^\dagger),\nabla
            C(z_{t},u_t^\dagger))$ and the next state
            $x_{t+1} \gets F(x_t, u_t^\dagger)$ from the
            oracle 
            \STATE Increment $t$ by one
        \ENDWHILE
    \end{algorithmic}
\end{algorithm}

\begin{figure}[p]
    \centering
    \begin{tikzpicture}
        \tikzstyle{labelarrow}=[thick,
        -{Latex[length=4mm,width=2mm]}]
        \tikzstyle{halfline}=[draw=black,very thick, dotted]
        \tikzstyle{markdot}=[circle,fill, inner sep=2pt]

    \draw[draw=black,very thick,-Latex] (0,0) -- (0:3cm)
        node [xshift=0.25cm] {$\alpha$};
    \draw[draw=black,very thick,-Latex] (0,0) -- (90:3cm)
        node [yshift=-0.25cm, xshift=0.25cm] {$\beta$};

    \draw[color=magenta, very thick, dashed] (0,0) ++
    (0:2cm) arc (0:90:2cm);
    \draw[labelarrow] (30:2cm) ++ (0:1cm) node[text
    width=2cm, anchor=west] {Minimize an
    approximant of $C$} -- (30:2cm);

    \draw[color=blue, very thick, dashed] (0,0) ++
    (135:2cm) arc (135:90:2cm);
    \draw[labelarrow] (115.5:2cm) ++ (135:1cm) node[text
    width=2cm, anchor=south east, xshift=0.25cm,
    yshift=-0.25cm] {Minimize a lower bound on $C$} --
    (115.5:2cm);

    \draw[color=red, very thick, dashed] (0,0) ++
    (0:2cm) arc (0:-45:2cm);
    \draw[labelarrow] (-22.5:2cm) ++ (-45:1cm) node[text
    width=2cm, anchor=north west] {Minimize an upper bound
    on $C$} -- (-22.5:2cm);

    \draw[color=black!100, very thick, dashdotted] (0,0) ++
    (135:2cm) arc (135:315:2cm);
    \draw[labelarrow] (215:2cm) ++ (-135:1.5cm)
    node[text width=2.2cm, anchor=north] {Ignored since
    $\approx$ maximize $C$} -- (215:2cm);

    \draw[halfline] (135:3cm) -- (315:3cm) 
        node[midway, below, xshift=-0.25cm, yshift=0.5cm]
        {\rotatebox{315}{$\alpha+\beta=0$}};

    \draw[halfline] (90:2cm) ++
    (135:1cm) -- (90:2cm) -- (0:2cm) node[midway, below,
    xshift=-0.25cm, yshift=0.5cm]
    {\rotatebox{315}{$\alpha+\beta=1$}};
    \draw[halfline] (0:2cm) ++ (315:1cm) -- (0:2cm);

    \node[markdot] at (315:2cm) {};
    \draw[labelarrow] (315:2cm) ++ (-135:1.1cm) node[text
    width=3.2cm, anchor=north] {Minimize\newline
    uncertainty in $C$} -- (315:2cm);

    \node[markdot] at (135:2cm) {};
    \draw[labelarrow] (135:2cm) ++ (-135:2cm) node[text
    width=2cm, anchor=north] {Maximize uncertainty\linebreak
    in $C$} -- (135:2cm);

    \node[markdot] at (45:2cm) {};
    \draw[labelarrow] (45:2cm) ++ (90:1.5cm) node[text
    width=4cm, anchor=south, xshift=1.25cm] {Minimize the
    pointwise-average of $C^+_{t-1}$ and $C^-_{t-1}$} --
    (45:2cm);

    \node[mark size=5pt,color=black] at (0:2cm)
        {{\color{cyan}\pgfuseplotmark{triangle*}}};

    \node[mark size=5pt,color=black] at (90:2cm)
        {\rotatebox{180}{{\color{cyan}
        \pgfuseplotmark{triangle*}}}};

    \node[draw] at (115:4.5cm) {$\mt=\alpha C_{t-1}^+ +
    \beta C_{t-1}^-$};
\end{tikzpicture}
    \caption{Interpretation of $\alpha,\beta$ parameter
        choices in Algorithm~\ref{algo:cov_method} to define
        $\mt$ \eqref{eq:m_alpha}. We focus on the parameters
        $(\alpha,\beta)$ lying on the unit circle, due to
        the scale-invariance of $\mt$. We mark the tightest
        upper bound $\mt=
        C_{t-1}^+$ with {\color{cyan} $\blacktriangle$}, and the tightest
        lower bound $\mt= C_{t-1}^-$ with
    {\color{cyan} $\blacktriangledown$}. See Table~\ref{tab:characteristics}
for exact ranges of parameter values.}
    \label{fig:characteristics}
\end{figure}
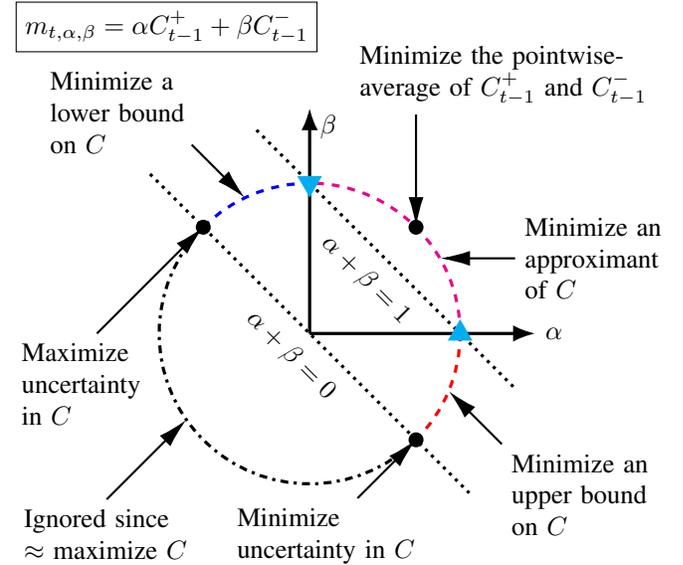

\newcommand{\widthInterpretation}{2.95cm}
\newcommand{\widthAB}{2.2cm}

\begin{table}[p]
   \caption{Interpretation of various choices of
   $\alpha$ and $\beta$ in Algorithm~\ref{algo:cov_method}}
   \label{tab:characteristics}
    \begin{tabular}{cc}
    \hline
    $\alpha, \beta$                   & The choice of
    $u_t^\dagger$ computed using \eqref{prob:algo_opt_prob}\\
    \hline\hline    
                                      & \\[-1ex]
                                      $\beta\leq0,\  \alpha
                                      + \beta \in (0, 1]$ & 
     minimizes an upper bound of $C$ over $\mathcal{U}$
     \\[1ex]
    $\alpha \leq 0,\  \alpha + \beta \in (0, 1]$     
    & minimizes a lower bound of $C$ over $\mathcal{U}$
    \\[1ex]
        $\beta < 0,\ \alpha + \beta=0$
    & has the least uncertainty in $C$ over $\mathcal{U}$
    \\ [2ex]
        $\alpha < 0,\ \alpha + \beta=0$
    &  has the highest uncertainty in $C$ over $\mathcal{U}$
    \\ [2ex]
    $\beta=\alpha > 0$      & \begin{minipage}{5cm}
        \centering
        minimizes the pointwise-average \linebreak of $C_{t-1}^+$ and
        $C_{t-1}^-$ over $\mathcal{U}$ 
    \end{minipage}\\[1ex]
    \hline
   \end{tabular}
\end{table}

We propose Algorithm~\ref{algo:cov_method} to address
Problem~\ref{prob_st:oc_p}. Algorithm~\ref{algo:cov_method} 
uses surrogate optimization to balance the need to
\emph{explore} the action space with the need to
\emph{exploit} the available knowledge to minimize $C$. 
We define a time-dependent surrogate function
$\mt: \mathcal{Z}\times \mathcal{U} \to \mathbb{R}$ for each
time step $t\in \mathbb{N}$,
\begin{align}
    \mt(z,u)&\triangleq \alpha C_{t-1}^+(z,u) + \beta
    C_{t-1}^-(z,u) \label{eq:m_alpha}\\
    &= (\alpha + \beta) C_{t-1}^-(z,u) + \alpha
    \GapC(z,u) \label{eq:m_alpha_gap_Cminus}\\
    &= (\alpha + \beta) C_{t-1}^+(z,u)-\beta \GapC(z,u)
    \label{eq:m_alpha_gap_Cplus},
\end{align}
where $\alpha$
and $\beta \in \mathbb{R}$ are user-specified \noindent
linear combination parameters, and $$\GapC(z,u)\triangleq
C_{t-1}^+(z,u) - C_{t-1}^-(z,u) \geq 0$$ for every $(z,u)\in
\mathcal{Z}\times \mathcal{U}$.
Algorithm~\ref{algo:cov_method} iteratively minimizes $\mt$
given $z_t$ to compute the one-step control action
$u_t^\dagger$ using only
the data known at time instant $t$. The computed action $u_t^\dagger$
approximates the optimal action $u_t^\ast$
used to define $\rho_t$ in \eqref{eq:instant_regret}.

Figure~\ref{fig:characteristics} and
Table~\ref{tab:characteristics} illustrates the implications
various choices of $\alpha$ and $\beta$. 
By construction, the control actions selected by
Algorithm~\ref{algo:cov_method} corresponding to
any parameter choice $(\alpha,\beta)$ is the same as the
$\left(\frac{\alpha}{\sqrt{\alpha^2 +
\beta^2}},\frac{\beta}{\sqrt{\alpha^2 + \beta^2}}\right)$.
Therefore, it suffices to focus on the parameter choices on
the unit circle as shown in
Figure~\ref{fig:characteristics}.  Since $\GapC\geq 0$,
\begin{align}
    \frac{\mt}{\alpha + \beta}&= \begin{cases}
        \begin{array}{ll}
            C_t^- - \frac{|\alpha|}{\alpha + \beta}
            \GapC \quad\leq C_t^-,& \alpha \leq 0\\
            C_t^+ + \frac{|\beta|}{\alpha + \beta}
            \GapC\quad\geq C_t^+,& \beta \leq 0\\
        \end{array}
    \end{cases}\label{eq:mt_scaled}.
\end{align}
for $\alpha + \beta > 0$, thanks to
\eqref{eq:m_alpha_gap_Cminus} and
\eqref{eq:m_alpha_gap_Cplus}. Thus,  \eqref{eq:mt_scaled}
justifies the first two rows of
Table~\ref{tab:characteristics},
\eqref{eq:m_alpha_gap_Cminus} and
\eqref{eq:m_alpha_gap_Cplus} justifies the third and fourth
rows, and the fifth row follows from \eqref{eq:m_alpha}.
Table~\ref{tab:characteristics} shows that, by varing the
linear combination parameters $\alpha$ and $\beta$, $\mt$
covers a variety of data-driven surrogate functions.  For
example, minimizing lower bounds of the unknown objective
function have been considered under \emph{the principle of
optimism under
uncertainty}~\cite{auer2002using,bogunovic2020corruption}.

We ignore parameter choices with $\alpha + \beta < 0$, since
it conflicts the goal of the problem formulation
\eqref{prob:OC_P}. Specifically, we observe that  controller synthesis via
minimization of $\mt$ with parameters $\alpha$ and $\beta$
with $\alpha + \beta < 0$ is equivalent to the
\emph{maximization} of an upper bound, lower bound, or an
approximant on $C$ for parameters $\alpha'=-\alpha$ and
$\beta'=-\beta$ with $\alpha'+\beta'>0$.

\subsection{Convexity-based properties of Algorithm~\ref{algo:cov_method}}

We study the
convexity properties of the surrogate function $\mt$ in  
Theorem~\ref{thm:mt_cvx}. Recall that difference-of-convex
functions are functions that can be expressed as a
difference of two convex functions~\cite{an2005dc}.
\begin{thm}[\textsc{Convexity-based properties of
    $\mt$}]\label{thm:mt_cvx} For any time step $t\in
    \mathbb{N}$, the function $\mt$ is:
    {\renewcommand{\theenumi}{\roman{enumi}}
    \begin{enumerate}
        \item a difference-of-convex function over 
            $\mathcal{Z}\times \mathcal{U}$ for any
            $\alpha,\beta \in \mathbb{R}$,
        \item a piecewise convex quadratic function over 
            $\mathcal{Z}\times \mathcal{U}$ for $\alpha >
            0 \geq \beta$, 
        \item a piecewise concave quadratic function over
            $\mathcal{Z}\times \mathcal{U}$ for $\beta >
            0 \geq \alpha$,
            and 
        \item a piecewise linear function over 
            $\mathcal{Z}\times \mathcal{U}$ for $\alpha
            =\beta$.
    \end{enumerate}}
\end{thm}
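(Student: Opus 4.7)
The plan is to work directly from the piecewise quadratic structure of $C^+_{t-1}$ and $C^-_{t-1}$ as defined in \eqref{eq:C_plus_defn}--\eqref{eq:C_minus_defn}. Writing $s=(z,u)$ and setting, for each past data point $q_i$, $f_i(s)\triangleq \ell(s;C,q_i) + \tfrac{L_C}{2}\|s-q_i\|^2$ and $h_i(s)\triangleq \ell(s;C,q_i) - \tfrac{L_C}{2}\|s-q_i\|^2$, I have $C^+_{t-1}=\min_i f_i$ as a pointwise minimum of convex quadratics (each with Hessian $L_CI$) and $C^-_{t-1}=\max_i h_i$ as a pointwise maximum of concave quadratics (each with Hessian $-L_CI$). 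I would tackle the four claims from this vantage point.

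For (ii), with $\alpha>0\geq\beta$, I would push the scalars through the envelopes. Scaling by a positive constant commutes with $\min$, while scaling by a non-positive one turns $\max$ into $\min$, so that
\[
m_{t,\alpha,\beta}(s) = \min_{i,j}\bigl(\alpha f_i(s) + \beta h_j(s)\bigr).
\]
Each atom $\alpha f_i + \beta h_j$ has Hessian $(\alpha+|\beta|)L_CI \succeq 0$ and is therefore a convex quadratic (strictly convex when $\alpha>0$). A pointwise minimum of finitely many convex quadratics is by definition a piecewise convex quadratic function. Statement (iii) is the mirror image: under $\beta>0\geq\alpha$, the symmetric identity $m_{t,\alpha,\beta} = \max_{i,j}(\alpha f_i + \beta h_j)$ presents $m_{t,\alpha,\beta}$ as a pointwise maximum of concave quadratics.

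For (iv), the heart of the argument is an exact cancellation. Because $\max_j h_j$ is independent of $i$, I can collapse the two envelopes into a single min-max:
\[
C^+_{t-1}(s)+C^-_{t-1}(s) = \min_i\max_j\bigl(f_i(s)+h_j(s)\bigr).
\]
Expanding $f_i+h_j = \ell(\cdot;C,q_i)+\ell(\cdot;C,q_j)+\tfrac{L_C}{2}\bigl(\|s-q_i\|^2-\|s-q_j\|^2\bigr)$, the $\|s\|^2$ contributions cancel and each atom is affine in $s$. Therefore $m_{t,\alpha,\alpha}=\alpha(C^+_{t-1}+C^-_{t-1})$ is an $\alpha$-scaling of a min-max of affine functions, hence piecewise linear; the sign of $\alpha$ is handled by interchanging the outer min and max when $\alpha<0$, with $\alpha=0$ trivial.

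For (i), cases (ii) and (iii) already exhibit $m_{t,\alpha,\beta}$ as piecewise convex (resp.\ concave) quadratic, which is trivially DC. For the remaining sign regimes (where $\alpha$ and $\beta$ share signs), it suffices to observe that $C^+_{t-1}$ and $C^-_{t-1}$ are individually DC: the identity $\min(g_1,g_2)=g_1+g_2-\max(g_1,g_2)$ together with induction on the number of data points exhibits $\min_i f_i$ as convex minus convex, and the analogous identity $\max(h_1,h_2)=h_1+h_2-\min(h_1,h_2)$ handles $\max_i h_i$. Since the set of DC functions is closed under real linear combinations, $\alpha C^+_{t-1}+\beta C^-_{t-1}$ is DC for every $\alpha,\beta\in\mathbb{R}$. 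The main subtlety I anticipate is in (iv): justifying the min-max rearrangement and verifying the algebraic cancellation of the $\|s\|^2$ terms; elsewhere the argument reduces to routine bookkeeping of how convexity behaves under positive and negative scaling.
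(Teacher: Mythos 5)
Your proof is correct, but it is organized around a genuinely different decomposition than the paper's. The paper's proof uses the identity $Q(s)=\tfrac12\|s-q\|^2+\ell(s;Q,q)$ to extract a single \emph{global} quadratic term $f_1(s)=\tfrac{L_C}{2}(s\cdot s)(\alpha-\beta)$ from both envelopes, leaving two piecewise-linear residuals $f_2$ (a min of affine functions) and $f_3$ (a max of affine functions); all four claims are then read off from a sign table for $f_1+f_2+f_3$. You instead keep the envelopes intact as $\min_i f_i$ and $\max_j h_j$, push the scalars $\alpha,\beta$ through (flipping $\max$ to $\min$ or vice versa according to sign), and merge the two envelopes over the product index set, so that $\mt$ becomes an explicit $\min_{i,j}$ or $\max_{i,j}$ of atoms $\alpha f_i+\beta h_j$ whose Hessians $(\alpha-\beta)L_C I$ you compute directly; for $\alpha=\beta$ the quadratic parts cancel \emph{within each atom} rather than in a separated global term. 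The two arguments rest on the same underlying cancellation, but yours has the advantage of directly producing the representation of $\mt$ as a finite min (or max) of at most $t^2$ convex (or concave) quadratics that the paper only assembles later, in the proof of Corollary~\ref{corr:algo_opt_prob}; the paper's version has the advantage that the piecewise-linear residuals $f_2,f_3$ are isolated once and reused verbatim for the epigraph reformulations in cases ii)--v) of that corollary. Your DC argument for claim i) via $\min(g_1,g_2)=g_1+g_2-\max(g_1,g_2)$, induction, and closure of DC functions under linear combinations is also sound and, if anything, more self-contained than the paper's appeal to the negation-symmetry $\mt=-\mtprime$ together with the cited closure properties.
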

\begin{proof}
    For any $q,s\in \mathcal{Z}\times \mathcal{U}$, recall
    that the quadratic function $Q(s)=\frac{s \cdot s}{2}$
    satisfies the relation
    \begin{align}
        Q(s) &=\frac{s \cdot s}{2}= \frac{1}{2}\|s - q\|^2 
        + \ell(s;Q, q)\label{eq:quad}.
    \end{align}
    Also, define $q_i=(z_i,u_i)\in \mathcal{Z}\times
    \mathcal{U}$ for each $i\in\Nint{t-1}$ as the past
    context-control pairs known from the data available at
    the time step $t$, $\{\datapointsi\}_{i\in\Nint{t-1}}$.

    On expanding \eqref{eq:m_alpha} at $t$, we
    obtain
    \begin{align}
        \mt(s) &=\alpha \min_{i\in \Nint{t-1}}\left({
        \ell(s;C,q_i) + \frac{L_C}{2} \|s - q_i\|^2}\right)
        \nonumber \\
          &\quad + \beta\max_{j\in\Nint{t-1}}\left({
          \ell(s;C,q_i) - \frac{L_C}{2} \|s -
  q_i\|^2}\right) \nonumber \\
          &=\underbrace{\frac{L_C}{2}(s \cdot s)\left(
          \alpha - \beta\right)}_{\triangleq f_1(s)} \nonumber
          \\
          &\quad+ \underbrace{\alpha\min_{i\in \Nint{t-1}}\left({
          \ell(s;C,q_i) -
          L_C\ell(s;Q,q_i)}\right)}_{\triangleq f_2(s)}
          \nonumber \\
          &\quad + \underbrace{\beta\max_{j\in
                  \Nint{t-1}}\left({\ell(s;C,q_j) +
L_C\ell(s;Q,q_j)}\right)}_{\triangleq f_3(s)} \label{eq:dc_decompose}.
    \end{align}
    Here, $f_1$ is a quadratic function for
    $\alpha\neq\beta$, while $f_2$ and $f_3$ are
    piecewise linear functions for $\alpha\neq0$ or
    $\beta\neq 0$ respectively.

    \begin{table}
        \caption{Convexity properties of $f_1$, $f_2$, and
        $f_3$ for $\mt=f_1+f_2+f_3$.}
        \label{tab:mt_decompose}
        \begin{tabular}{|c|c|c|c|}
            \hline
            {\begin{minipage}{1.25cm}\centering $\alpha,\beta$\end{minipage}} 
            & Quadratic $f_1$ & Piecewise linear $f_2$ 
            & Piecewise linear $f_3$\\\hline\hline
            $\alpha > 0 \geq \beta$ &
            Convex & \multicolumn{2}{c|}{Concave}\\\hline
            $\alpha > \beta \geq 0$ & Convex &
            \multirow{3}{*}{Concave}  &
            \multirow{3}{*}{Convex}\\\cline{1-2}
            $\alpha=\beta \geq 0$ & Zero & & \\\cline{1-2}
            $\beta> \alpha \geq 0$ & Concave &  & \\\hline
            $\beta> 0 \geq \alpha$ &
            Concave & \multicolumn{2}{c|}{Convex}\\\hline
        \end{tabular}
    \end{table}

    Table~\ref{tab:mt_decompose} summarizes the
    convexity/concavity of the functions $f_1,f_2,$ and
    $f_3$ for various ranges where either $\alpha$ or
    $\beta$ or both are nonnegative. In all of these cases,
    we notice that $\mt$ is a sum of a convex and a concave
    function. We know that $\mt$ is a sum of a convex and a
    concave function when $\alpha$ and $\beta$ are negative,
    since $\mt=-\mtprime$ for $\alpha'=-\alpha>0$
    and $\beta'=-\beta>0$. Since a
    sum of a convex and a concave function is also a
    difference-of-convex function and difference-of-convex
    functions are closed under negation, we have the proof
    of i).
    
    For $\alpha > 0\geq \beta$ and $\beta > 0\geq
    \alpha$, $f_2 + f_3$ can be expressed as a minimum and a
    maximum of $t^2$ linear functions respectively.
    Consequently, we see that $\mt$ is a minimum and a
    maximum of convex and concave quadratics in these cases
    (see first and last row of
    Table~\ref{tab:mt_decompose}). This completes the proof
    for ii) and iii).

    Finally, iv) follows from the middle row
    in Table~\ref{tab:mt_decompose}.
\end{proof}

\begin{corr}[\textsc{Algorithm~\ref{algo:cov_method}
    requires convex or non-convex
optimization based on parameter choices}]\label{corr:algo_opt_prob}
    For any time step $t\in \mathbb{N}$, the optimization
    problem \eqref{prob:algo_opt_prob} can be solved via
    {\renewcommand{\theenumi}{\roman{enumi}}
    \begin{enumerate}
        \item the finite minimum of at most $t^2$ convex quadratic
            programs for $\alpha > 0 \geq \beta$, 
        \item the finite minimum of at most $t$ convex quadratic
            programs for $\alpha > \beta \geq 0$,
        \item the finite minimum of at most $t$ linear programs for
            $\alpha = \beta$,
        \item a single concave quadratic minimization
            program for $\beta > \alpha \geq 0$, and
        \item the finite minimum of at most $t$ concave
            quadratic minimization programs for $\beta >
            0 \geq \alpha$.
    \end{enumerate}}
    \begin{proof}
        The proof of i) relies on the observation that $f_1$
        is a convex quadratic function and $(f_2+f_3)$ is a
        concave piecewise linear function, with at most
        $t^2$ pieces, when $\alpha > 0 \geq \beta$. In other
        words, $\mt$ is a minimum of at most $t^2$ convex
        quadratic functions~\cite{an2005dc}. Consequently,
        we can solve the optimization problem
        \eqref{prob:algo_opt_prob} by considering each one
        of the quadratic function separately, solve the
        corresponding convex quadratic programs, and then
        compute the finite minimum of these programs.

        The proof of ii) relies on the observation that
        $f_2$ is a finite minimum of at most $t$ linear
        functions, and $f_3$ can be formulated as an
        optimization over a polytope using the epigraph
        reformulation~\cite[Sec. 4.3.1]{BoydConvex2004}.
        Thus, for each of the linear function in $f_2$, we
        arrive at a convex quadratic program.  Similarly to
        i), we can compute the optimal solution of
        \eqref{prob:algo_opt_prob} via the finite minimum of
        $t$ convex quadratic programs.

        The proof of iii) follows similarly to ii), with the
        only difference that $f_1=0$. Consequently, we can
        compute the optimal solution of
        \eqref{prob:algo_opt_prob} via the finite minimum of
        at most $t$ linear programs.

        Using the epigraph formulation~\cite[Sec.
        4.3.1]{BoydConvex2004} to handle minimization of the
        resulting convex piecewise linear function
        $f_2+f_3$, we minimize a concave quadratic function
        over a polytope for iv).

        The proof of v) follows similarly to ii), with the
        difference that $f_1$ is now a concave quadratic
        function. Consequently, we have to consider the
        finite minimum of solutions of $t$ concave quadratic
        minimization problems.
    \end{proof}
\end{corr}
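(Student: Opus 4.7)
The plan is to build directly on the decomposition $\mt(s) = f_1(s) + f_2(s) + f_3(s)$ derived in the proof of Theorem~\ref{thm:mt_cvx}. Here $f_1$ is a single quadratic form whose curvature is dictated by the sign of $\alpha - \beta$, while $f_2$ and $f_3$ are $\alpha$ and $\beta$ times a pointwise minimum and pointwise maximum, respectively, of at most $t$ affine functions in $s$. My intent is to convert the minimization of $\mt$ over $\mathcal{U}$ into a collection of standard mathematical programs by applying two simple reduction rules to each of $f_2, f_3$, depending on whether the scalar prefactor makes it concave or convex piecewise linear.

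The two rules are as follows. First, whenever a component appears in the objective as a concave piecewise linear function (the prefactor $\alpha$ or $\beta$ turns the underlying min/max into a pointwise minimum of affine functions), I enumerate its at most $t$ affine pieces and split the problem into $t$ subproblems, one per piece. Second, whenever a component is convex piecewise linear (a pointwise maximum of affine functions), I apply the standard epigraph reformulation~\cite[Sec.~4.3.1]{BoydConvex2004} to absorb it into a single scalar slack variable subject to at most $t$ affine inequality constraints. Which rule applies to each of $f_2, f_3$ is read off directly from the curvature table in Table~\ref{tab:mt_decompose}.

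With these rules in hand, the five cases follow mechanically. In case (i), both $f_2$ and $f_3$ are concave piecewise linear, so enumeration of both yields $t\cdot t = t^2$ subproblems, each a convex QP because $f_1$ is convex quadratic. In case (ii), only $f_2$ is concave piecewise linear (enumerated) while $f_3$ is convex piecewise linear (epigraphed), producing $t$ convex QPs. Case (iii) is identical to (ii) except that $f_1 \equiv 0$ collapses each convex QP into an LP. Case (v) mirrors (ii) with the single change that $f_1$ is now concave quadratic, so each of the $t$ subproblems becomes a concave QP minimization. Case (iv) uses the fact that the combined piecewise linear remainder $f_2 + f_3$ admits a single joint epigraph reformulation with at most $2t$ affine inequality constraints, leaving only the concave quadratic $f_1$ in the objective and hence producing a single concave quadratic minimization over a polytope.

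The main obstacle across all five cases will be to correctly identify which summands force enumeration and which admit a pure epigraph reduction: a concave piecewise linear summand in a minimization cannot be absorbed into affine inequality constraints without disjunctive reasoning, whereas a convex piecewise linear summand collapses into a slack variable and $t$ affine inequalities. Case (iv) requires the most care in this respect, since one must verify that the signs of $\alpha,\beta$ place $f_2+f_3$ on the epigraph-reducible side rather than forcing enumeration. Once this dichotomy has been read off carefully from Table~\ref{tab:mt_decompose}, the number of programs and their type---LP, convex QP, or concave QP---is determined entirely by the enumeration count multiplied over the two components, combined with the curvature of $f_1$.
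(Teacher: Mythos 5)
Your proposal follows the paper's proof essentially verbatim: it builds on the same decomposition $\mt = f_1 + f_2 + f_3$ from Theorem~\ref{thm:mt_cvx}, applies the same dichotomy between enumerating the affine pieces of concave piecewise linear summands and epigraph-reducing convex piecewise linear ones, and arrives at the same program counts and types in all five cases. The one step you flag as delicate --- verifying in case (iv) that $f_2+f_3$ actually lands on the epigraph-reducible side when $\alpha>0$ (Table~\ref{tab:mt_decompose} lists $f_2$ as concave there) --- is asserted rather than verified in the paper as well, so your attempt matches the published argument point for point.
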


\begin{figure}[h]
    \centering
    \vspace*{-0.5em}
    \begin{tikzpicture}
        \tikzstyle{labelarrow}=[thick,
        -{Latex[length=4mm,width=2mm]}]
        \tikzstyle{halfline}=[draw=black,very thick, dotted]
        \tikzstyle{markdot}=[circle,fill, inner sep=2pt]

    \fill[fill=black!30] (0:0cm) -- (0:2cm) arc (0:45:2cm)
    -- cycle;
    \fill[fill=black!30] (0:0cm) -- (45:2cm) arc (45:90:2cm)
    -- cycle;
    \fill[fill=black!10] (0:0cm) -- (90:2cm) arc (90:135:2cm)
    -- cycle;
    \fill[fill=black!10] (0:0cm) -- (135:2cm) arc
    (135:180:2cm) -- cycle;

    \draw[draw=black,very thick,-Latex] (0,0) -- (45:3.5cm)
        node [xshift=0.25cm] {$\alpha$};
    \draw[draw=black,very thick,-Latex] (0,0) -- (135:3.5cm)
        node [xshift=-0.25cm] {$\beta$};
    \draw[draw=black,very thick, dotted] (0,0) -- (90:4cm);

    \draw[color=blue, very thick, dashed] (0,0) ++
    (180:2cm) arc (180:135:2cm);

    \draw[color=red, very thick, dashed] (0,0) ++
    (45:2cm) arc (45:0:2cm);

    \draw[color=magenta, very thick, dashed] (0,0) ++
    (45:2cm) arc (45:135:2cm);

    \draw[halfline] (180:4.25cm) -- (0:4.25cm) 
        node[midway, below, xshift=-0.25cm]
        {{$\alpha+\beta=0$}};


    \node[mark size=5pt,color=black] at (45:2cm)
        {{\color{cyan}\pgfuseplotmark{triangle*}}};

    \node[mark size=5pt,color=black] at (135:2cm)
        {\rotatebox{180}{{\color{cyan}
        \pgfuseplotmark{triangle*}}}};

    \node[markdot] at (0:2cm) {};
    \node[draw, very thin, xshift=1cm, fill=white] at (0:2cm) {$\GapC$};
    \node[markdot] at (90:2cm) {};
    \node[draw, yshift=0.8cm, fill=white] at (90:2cm) {$\frac{C_t^- + C_t^+}{2}$};
    \node[markdot] at (180:2cm) {};
    \node[draw, xshift=-1cm, fill=white] at (180:2cm)
    {$-\GapC$};
    \node[draw, xshift=-1cm] at (135:2cm) {$C_t^-$};
    \node[draw, xshift=1cm] at (45:2cm) {$C_t^+$};

    \node[xshift=0.25cm, yshift=0.15cm] at (22.5:1cm) {i)};
    \node[xshift=0.15cm, yshift=0.25cm] at (67.5:1cm) {ii)};
    \node[xshift=-0.175cm, yshift=0.2cm] at (102.5:1cm)
    {iv)};
    \node[xshift=-0.25cm] at (147.5:1cm) {v)};
    \node[yshift=-0.5cm, xshift=0.25cm] at (90:4cm) {iii)};

    \draw  [|-Latex, postaction={decorate, decoration={text
        align={center},raise={1mm},text along path,
text={|\small|Concave quadratic minimization}}}] (0,0) ++
(180:4cm) arc (180:90:4cm);
    \draw  [|-|, postaction={decorate, decoration={text
        align={center},raise={1mm},text along path,
text={|\small|Convex quadratic/linear minimization}}}] (0,0) ++
(90:4cm) arc (90:0:4cm);

    \end{tikzpicture}
    \caption{Illustration of the convexity/non-convexity of
        \eqref{prob:algo_opt_prob} in 
        Algorithm~\ref{algo:cov_method} under different
        parameter choices via
        Corollary~\ref{corr:algo_opt_prob}. For parameter
        choices in the
        darker-shaded regions $(\alpha
        \geq \beta, \alpha+\beta \geq 0)$, we can solve
        \eqref{prob:algo_opt_prob} using a collection of convex
        quadratic
        programs, while in the lighter shaded regions $(\alpha <
        \beta, \alpha+\beta \geq 0)$, we need to solve concave quadratic
minimization programs. The special case $\alpha=\beta$ only requires 
linear programming, see~\cite{vinod2020convexified}.}
    \label{fig:cvx_ncvx}
\end{figure}
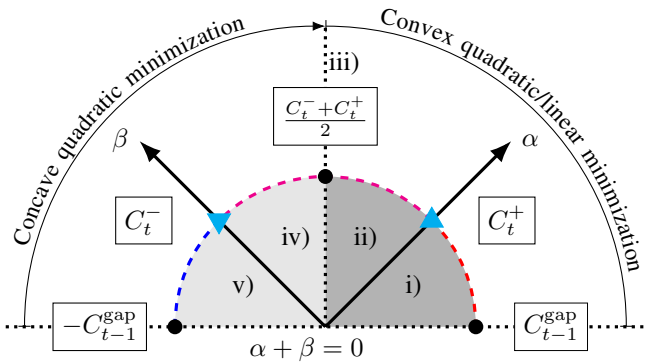

We illustrate the implications of
Corollary~\ref{corr:algo_opt_prob} in
Figure~\ref{fig:cvx_ncvx}.  The optimization problem
\eqref{prob:algo_opt_prob} may be solved via convex
optimization in cases i), ii), and iii), i.e.,  $\alpha\geq
\beta$ and $\alpha + \beta \geq 0$. In the remaining cases
iv) and v), i.e.,  $\alpha< \beta$ and $\alpha + \beta \geq
0$, we need to solve concave quadratic minimization
problems, which are harder to solve in practice. However,
commercial solvers like \texttt{GUROBI} can now handle these
problems using spatial branching~\cite{gurobi}. 

\begin{rem}[\textsc{Implementation of
        Algorithm~\ref{algo:cov_method}}]
    Corollary~\ref{corr:algo_opt_prob} shows that
    Algorithm~\ref{algo:cov_method} can be implemented using
    an off-the-shelf solver like
    \texttt{GUROBI}~\cite{gurobi} for every
    relevant parameter choice of $\alpha+\beta \geq 0$.
\end{rem}

\begin{rem}[\textsc{Relationship with our prior work}]
    In~\cite{vinod2020convexified}, we proposed \texttt{C2Opt} that
    approximately solves \eqref{prob:OC_P} via convexified
    contextual optimization. Algorithm~\ref{algo:cov_method}
    simplifies to \texttt{C2Opt} when $\alpha=\beta > 0$,
    which corresponds to case iii) in
    Corollary~\ref{corr:algo_opt_prob}.
\end{rem}

\subsection{Imposing available high-level information on $C$}

In some cases, we may additionally know that $C$ is convex.
For example, consider an instantiation of \eqref{prob:OC_P}
with an identity context map, control-affine unknown
dynamics $x_{t+1} = F(x_t,u_t)=F_1(x_t)+F_2(x_t)u_t$, and
$C(x_t,u_t)=c(x_{t+1})$ for some convex function $c:
\mathcal{X}\to \mathbb{R}$. It is easy to see that $C$ is
convex over the control action $u_t$ for every fixed current
state $x_t$ at every time step $t$.  In such cases, we can
use a tighter, convex and piecewise-linear, data-driven
minorant $C_{t,\mathrm{convex}}^-: \mathcal{Z}\times
\mathcal{U}\to \mathbb{R}$ instead of $C_t^-$,  
\begin{align}
    C_{t,\mathrm{convex}}^-(s)&\triangleq\max_{i\in\Nint{t}}\ C(q_i)
    + \nabla
    C(q_i) \cdot (s-q_i)\label{eq:convex_C_minus},
\end{align}
with $C_t^- \leq C_{t,\mathrm{convex}}^- \leq C$.
\begin{rem}
    The data-driven, one-step optimal control
    \eqref{prob:algo_opt_prob} in
    Algorithm~\ref{algo:cov_method} is convex for any
    $\alpha \geq 0$, which corresponds to cases i), ii),
    iii), and iv) in Corollary~\ref{corr:algo_opt_prob}.
\end{rem}

We can also tighten the data-driven minorant and
majorants of $C$, when $C$ is additionally known to be
monotone, bounded from above or below, or when $\nabla
C$ is known to lie in a convex set. We refer the reader
to~\cite[Tab. I]{vinod2020convexified} for the suitable
modifications to the computation of $C^+$  and $C^-$ to
incorporate such side information.

\begin{figure}
    \centering
    \includegraphics[width=1\linewidth]{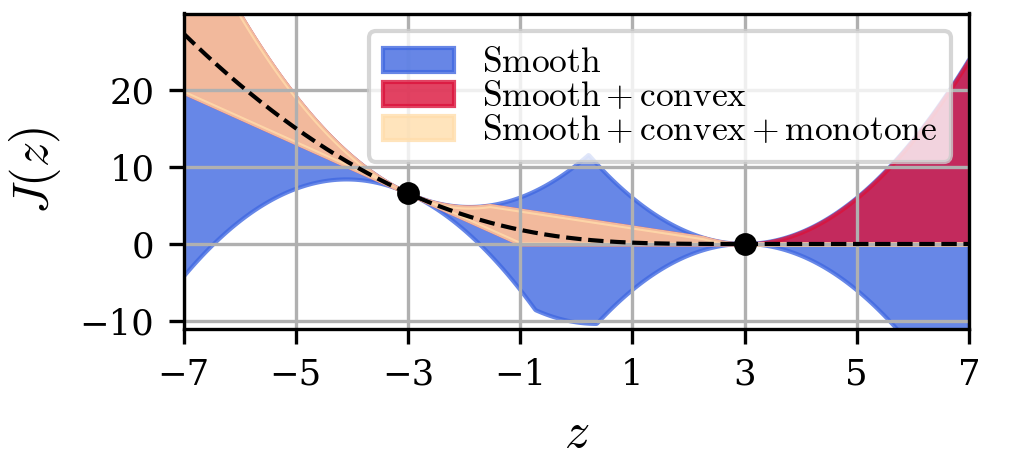}
    \caption{Approximation bounds
        $J^\pm(\cdot)$ for
    $J(z)=-\log(\mathrm{NormalCDF}(z))$ (dotted line)
computed from two data points (black dots) with varying
levels of side information; Lipshitz gradient constant $L_J=3$}
    \label{fig:norm_cdf}
\end{figure}
Figure~\ref{fig:norm_cdf} shows the data-driven bounds
$J^\pm(\cdot)$ for $J(z)=-\log(\mathrm{NormalCDF}(z))$ using
various types of side information from only two data points.
In this case, $J(z)$ is known to be monotone decreasing and
convex~\cite[Ex. 3.39]{BoydConvex2004}. As expected, imposing additional side
information significantly improves the tightness of the
bounds $J^\pm(\cdot)$.  Remarkably, we recover the unknown
function $J(z)$ for $z\geq 3$ with $J^+(z)=J^-(z)=J(z)=0$,
when we impose the monotone decreasing and convex property
of $J(\cdot)$.

\section{Regret analysis for Algorithm~\ref{algo:cov_method}}

We now turn our attention to
Problems~\ref{prob_st:oc_p_regret}
and~\ref{prob_st:oc_p_regret_lb} --- the regret analysis of
Algorithm~\ref{algo:cov_method}. First, we show that
Algorithm~\ref{algo:cov_method} enjoys an upper bound on the
regret when $\alpha \leq 0$ and $\alpha + \beta >
0$. Next, we characterize an upper
bound on the average regret of
Algorithm~\ref{algo:cov_method} with $\alpha=0$ and $\beta
> 0$. Finally, we show that the proposed upper bound
matches an algorithm-independent lower bound up to a
constant factor demonstrating the worst-case optimality of
Algorithm~\ref{algo:cov_method} with respect to the average regret. 

\subsection{Upper bound on the regret for
Algorithm~\ref{algo:cov_method}}

\begin{prop}[\textsc{Upper bound on
    regret}]\label{prop:dyn_reg} 
    For any $t\in \mathbb{N}\setminus\Nint{N-1}$, the
    regret of Algorithm~\ref{algo:cov_method} at time
    step $t$ is bounded from above for $\alpha \leq 0$ and
    $\alpha + \beta > 0$, 
    \begin{align}
        \rho_t\leq L_C\left(1 + \frac{|\alpha|}{\alpha +
        \beta}\right) \min_{i\in\Nint{t-1}}\|s_t -
        q_i\|^2\label{eq:Dyn_Reg_UB},
    \end{align}
    where $s_t=(z_t, u_t)\in \mathcal{Z}\times
    \mathcal{U}$ and $q_i$, $i\in \Nint{t-1}$ are the past
    context-control action pairs. 
\end{prop}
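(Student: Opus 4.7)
The structural idea is that, for the parameter regime $\alpha \le 0$ and $\alpha+\beta>0$, the surrogate $\mt$ is (up to the positive scale $\alpha+\beta$) a lower bound for the unknown cost $C$, so minimizing it competes with the optimal $u_t^\ast$ on one side while the majorant $C_{t-1}^+$ controls what happens at the algorithm's choice on the other side. Concretely, I would let $s_t=(z_t,u_t)$ and $s_t^\ast=(z_t,u_t^\ast)$, and work with the rescaled surrogate $\tilde m_t\triangleq \mt/(\alpha+\beta)$, which by \eqref{eq:m_alpha_gap_Cminus} equals $C_{t-1}^- - \tfrac{|\alpha|}{\alpha+\beta}\GapC$.

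First I would use optimality of $u_t$ for \eqref{prob:algo_opt_prob} and positivity of $\alpha+\beta$ to conclude $\tilde m_t(s_t)\le \tilde m_t(s_t^\ast)$. Since $\GapC\ge 0$ and $\alpha\le 0$, the definition of $\tilde m_t$ gives $\tilde m_t(s_t^\ast) \le C_{t-1}^-(s_t^\ast)$, and by Lemma~\ref{lem:approx} applied to $C$ we have $C_{t-1}^-(s_t^\ast)\le C(s_t^\ast)$. Chaining these inequalities yields $\tilde m_t(s_t)\le C(s_t^\ast)$, hence
\begin{align*}
\rho_t &= C(s_t)-C(s_t^\ast) \\
&\le C(s_t)-\tilde m_t(s_t) \\
&= \bigl(C(s_t)-C_{t-1}^-(s_t)\bigr) + \tfrac{|\alpha|}{\alpha+\beta}\GapC(s_t).
\end{align*}

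Next I would bound the two nonnegative contributions separately. By Lemma~\ref{lem:approx} the first term is at most $L_C \min_{i\in\Nint{t-1}}\|s_t-q_i\|^2$. For the second, observe that for each individual datum $q_i$ the definitions \eqref{eq:C_plus_defn} and \eqref{eq:C_minus_defn} yield $C_{t-1}^+(s_t)-C_{t-1}^-(s_t)\le L_C\|s_t-q_i\|^2$ (the linear parts cancel and the quadratic envelopes add), and taking the minimum over $i\in\Nint{t-1}$ gives $\GapC(s_t)\le L_C\min_{i\in\Nint{t-1}}\|s_t-q_i\|^2$. Substituting both bounds produces \eqref{eq:Dyn_Reg_UB}.

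The main conceptual step, and the place I would be careful, is the second bound: one might be tempted to write $\GapC(s_t) \le 2L_C\min_i\|s_t-q_i\|^2$ by adding the two Lemma~\ref{lem:approx} error bounds, which loses a factor of two and breaks the constant $1+|\alpha|/(\alpha+\beta)$ in the statement. The sharp route is to cancel the common linearization $\ell(s_t;C,q_i)$ at each fixed index $i$ before minimizing over $i$, which exploits the fact that $C_{t-1}^+$ and $C_{t-1}^-$ are evaluated with the same data set rather than going through $C$ as an intermediate. The rest of the argument is an optimality comparison plus an application of Lemma~\ref{lem:approx}, and no further machinery is required.
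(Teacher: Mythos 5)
Your proposal is correct and follows essentially the same route as the paper's proof: an optimality comparison $\mt(z_t,u_t)\le\mt(z_t,u_t^\ast)$, the chain $\mt/(\alpha+\beta)\le C_{t-1}^-\le C$ at $u_t^\ast$ via \eqref{eq:m_alpha_gap_Cminus} and nonnegativity of $\GapC$, and then a bound on the residual terms at $s_t$ by $L_C\min_{i\in\Nint{t-1}}\|s_t-q_i\|^2$. Your explicit cancellation of the common linearization $\ell(s_t;C,q_i)$ to get $\GapC(s_t)\le L_C\min_i\|s_t-q_i\|^2$ without a factor of two is in fact slightly more careful than the paper, which obtains the same bound but attributes it to Lemma~\ref{lem:approx} as a black box (whose two error bounds, added naively, would only give $2L_C\min_i\|s_t-q_i\|^2$).
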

\begin{proof}
    Since $u_t$ minimizes $\mt$, $u_t$ minimizes         
    $\frac{1}{\alpha + \beta}\mt$ for
    $\alpha+\beta > 0$. Since $\alpha\leq 0$, we have,
    by using \eqref{eq:mt_scaled},
    \begin{align}
        \frac{\mt(z_t, u_t)}{\alpha + \beta} &\leq
        \frac{\mt(z_t, u_t^\ast)}{\alpha + \beta} \nonumber \\
                                                     &=
        C_t^-(z_t, u_t^\ast) - \frac{|\alpha|}{\alpha +
        \beta}\GapC(z_t, u_t^\ast) \nonumber \\
        &\leq C_t^-(z_t, u_t^\ast) \leq C(z_t,
        u_t^\ast),\label{eq:Dyn_Reg_LB}
    \end{align}
    at every time step $t\in \mathbb{N}\setminus\Nint{N-1}$.
    Recall that $\GapC$ is a
    non-negative function. 
    
    From \eqref{eq:mt_scaled} and \eqref{eq:Dyn_Reg_LB}, the
    regret at $t$ is bounded from above,
    \begin{align}
        \rho_t &= C(z_t, u_t) - C(z_t, u_t^\ast) \nonumber \\
               &\leq C(z_t, u_t) - \frac{\mt(z_t,
               u_t)}{\alpha + \beta} \nonumber \\
               &\leq C(z_t, u_t) - C_t^-(z_t,
               u_t) + \frac{|\alpha|}{\alpha +
               \beta}\GapC(z_t, u_t) \nonumber \\
               &\leq \left(1 + \frac{|\alpha|}{\alpha +
               \beta}\right)\GapC(z_t, u_t).
    \end{align}
    By Lemma~\ref{lem:approx}, $\GapC(z_t, u_t) \leq
    L_C\min_{i\in\Nint{t-1}}\|s_t - q_i\|^2$. Thus, $\rho_t
    \leq L_C\left(1 + \frac{|\alpha|}{\alpha + \beta}\right)
    \min_{i\in\Nint{t-1}}\|s_t - q_i\|^2$, as desired.
\end{proof}
We next recall the volume counting lemma, which we prove for
the sake of completeness in Appendix~\ref{app:lem_pidgeonhole}. 
\begin{lem}[\textsc{Volume counting lemma}]\label{lem:pidgeonhole}
    For any $\epsilon > 0$, any compact set $
    \mathcal{S} \subset \mathbb{R}^d$, choose
    \begin{align}
        T\geq \left\lceil{
           \left( 
               \diam(\mathcal{S})\sqrt{\frac{d}{\epsilon}}
           \right)}^d
        \right\rceil + 1.
    \end{align}
    For any finite collection of distinct points
    $q_i\in \mathcal{S}$ for every $i\in \Nint{T}$, there
    exists $t\in \Nint{T-1}$ such that
    $\min_{i\in\Nint{t}}\|q_{t+1} - q_i\|^2\leq \epsilon$.
\end{lem}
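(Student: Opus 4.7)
The plan is a straightforward covering/pigeonhole argument. First I would enclose the compact set $\mathcal{S}$ inside an axis-aligned hypercube of side length $D \triangleq \diam(\mathcal{S})$, which is possible since the coordinate-wise extent of $\mathcal{S}$ is at most $D$. I would then partition this enclosing cube into $k^d$ congruent subcubes of side length $D/k$, where $k \triangleq \lceil D\sqrt{d/\epsilon}\,\rceil$.

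The key geometric observation is that any two points lying inside the same subcube are at Euclidean distance at most $(D/k)\sqrt{d}$, hence squared distance at most $D^2 d / k^2$. By the choice of $k$, we have $k \geq D\sqrt{d/\epsilon}$, so $D^2 d/k^2 \leq \epsilon$. Thus any two points sharing a subcube satisfy the desired inequality $\|q - q'\|^2 \leq \epsilon$.

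Next I would invoke the pigeonhole principle. The hypothesis guarantees
\begin{align*}
    T - 1 \geq \left\lceil \left(D\sqrt{d/\epsilon}\right)^d \right\rceil \geq k^d,
\end{align*}
so among the $T$ distinct points $q_1,\ldots,q_T \in \mathcal{S}$, at least two, say $q_j$ and $q_{t+1}$ with $j < t+1 \leq T$, must fall in the same subcube. Setting this index $t \in \Nint{T-1}$ yields
\begin{align*}
    \min_{i\in\Nint{t}}\|q_{t+1} - q_i\|^2 \leq \|q_{t+1} - q_j\|^2 \leq \epsilon,
\end{align*}
which completes the proof.

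I do not anticipate any real obstacle: the argument is a textbook volume/pigeonhole count. The only mildly delicate points are bookkeeping the constants correctly (ensuring $k^d \leq \lceil (D\sqrt{d/\epsilon})^d\rceil$ so that the stated lower bound on $T$ is exactly what makes the pigeonhole step go through) and reindexing so that the pair of colliding points is phrased as a new point $q_{t+1}$ versus the previously listed $q_1,\ldots,q_t$, rather than as an unordered pair.
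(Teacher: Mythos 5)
Your approach is the same grid-partition-plus-pigeonhole argument the paper uses, but the constant bookkeeping you flag as ``mildly delicate'' is exactly where the argument breaks. You partition the enclosing cube into $k^d$ cells with $k=\lceil D\sqrt{d/\epsilon}\,\rceil$, so the pigeonhole step needs $T\geq k^d+1=\lceil D\sqrt{d/\epsilon}\,\rceil^d+1$. The hypothesis only supplies $T\geq\lceil (D\sqrt{d/\epsilon})^d\rceil+1$, and your claimed inequality $\lceil x^d\rceil\geq\lceil x\rceil^d$ goes the wrong way: since $\lceil x\rceil^d$ is an integer that is at least $x^d$, one has $\lceil x\rceil^d\geq\lceil x^d\rceil$, and the inequality can be strict (take $x=1.5$, $d=2$: $\lceil x^d\rceil=3<4=\lceil x\rceil^d$). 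So with the stated $T$ you cannot conclude that two points share a cell of your grid.

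To be fair, the paper's own proof is loose on the very same point: it asserts that $\lceil \diam(\mathcal{S})^d/(\epsilon/d)^{d/2}\rceil$ cubes of side $\sqrt{\epsilon/d}$ cover the cube of side $\diam(\mathcal{S})$, which is only the volume lower bound, not the size of an actual covering (a $1.5\times1.5$ square requires $4$ unit squares, not $\lceil 2.25\rceil=3$). The clean fix is to state the threshold as $T\geq\lceil\diam(\mathcal{S})\sqrt{d/\epsilon}\,\rceil^d+1$, or to crudely replace $\diam(\mathcal{S})$ by $2\,\diam(\mathcal{S})$ in the stated bound; either change costs only a dimension-dependent constant and is harmless everywhere the lemma is invoked (Corollary~\ref{corr:vol_bd}, Theorem~\ref{thm:sharp_ubd}, Lemma~\ref{lem:disjointball}), since those results absorb such factors into unspecified constants. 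The rest of your write-up --- the diagonal bound $\|q-q'\|^2\leq d(D/k)^2\leq\epsilon$ for two points in one cell, and the reindexing via $t=\max(i,j)-1$ --- is correct and matches the paper's proof.
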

\begin{corr}\label{corr:vol_bd}
    For any $\delta > 0$, $\lvert \{ t \in [T] : \rho_t \geq
    \delta \}\rvert \leq M_0 \delta^{-d/2}$ with $d=n_{
\mathcal{Z}} + n_{ \mathcal{U}}$, where the
    constant $M_0$ is determined by $L_C$, $\alpha$,
    $\beta$, and the diameters of the sets $\mathcal{Z}$ and
    $\mathcal{U}$.
\end{corr}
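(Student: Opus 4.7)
The plan is to combine Proposition~\ref{prop:dyn_reg} with the volume counting lemma (Lemma~\ref{lem:pidgeonhole}) applied to the subsequence of ``high-regret'' time steps. Intuitively, each high-regret step forces the corresponding context-control point $s_t$ to be far from every point previously visited, so only finitely many such steps can be packed into the bounded set $\mathcal{Z}\times\mathcal{U}$.

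First, I would set $K \triangleq L_C\bigl(1+\tfrac{|\alpha|}{\alpha+\beta}\bigr)$ and $\epsilon \triangleq \delta/K$, so that Proposition~\ref{prop:dyn_reg} rewrites the high-regret inequality $\rho_t\geq \delta$ as the geometric gap condition
\[
    \min_{i\in\Nint{t-1}}\,\|s_t - q_i\|^2 \;\geq\; \epsilon.
\]
Next, enumerate the high-regret time indices in $\Nint{T}$ as $t_1<t_2<\cdots<t_m$, where $m = |\{t\in\Nint{T} : \rho_t\geq \delta\}|$. For each $k\in\{2,\ldots,m\}$, the past data used at time $t_k$ contains in particular the earlier high-regret points $s_{t_1},\ldots,s_{t_{k-1}}$, so the gap condition at time $t_k$ yields $\|s_{t_k} - s_{t_j}\|^2\geq \epsilon$ for every $j<k$. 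In particular the points $s_{t_1},\ldots,s_{t_m}$ are pairwise distinct and all lie in the compact set $\mathcal{Z}\times\mathcal{U}\subset\mathbb{R}^d$.

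Finally, I would apply the contrapositive of Lemma~\ref{lem:pidgeonhole} to the sequence $(s_{t_k})_{k=1}^{m}$, with ambient dimension $d=n_{\mathcal{Z}}+n_{\mathcal{U}}$ and diameter $D \triangleq \diam(\mathcal{Z}\times\mathcal{U})$. Since each new point is at distance at least $\sqrt{\epsilon}$ from all previous ones (using, if needed, any $\epsilon'<\epsilon$ to convert $\geq$ to $>$ and then letting $\epsilon'\uparrow\epsilon$), the lemma forces
\[
    m \;\leq\; \Bigl\lceil \bigl(D\sqrt{d/\epsilon}\bigr)^{d} \Bigr\rceil,
\]
and substituting $\epsilon = \delta/K$ gives the bound $m \leq M_0\,\delta^{-d/2}$ with the explicit constant
\[
    M_0 \;\triangleq\; 2\bigl(D\sqrt{d K}\bigr)^{d} \;=\; 2\Bigl(D\sqrt{d L_C\bigl(1+\tfrac{|\alpha|}{\alpha+\beta}\bigr)}\Bigr)^{d},
\]
which depends only on $L_C$, $\alpha$, $\beta$, and the diameters of $\mathcal{Z}$ and $\mathcal{U}$, as claimed. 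The only subtlety in this plan is the $\geq$-versus-$>$ issue in invoking the contrapositive and the bookkeeping in passing from $\lceil\cdot\rceil$ to the clean bound $M_0\delta^{-d/2}$; neither involves real work, so no step is genuinely difficult.
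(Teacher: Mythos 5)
Your proposal is correct and follows the same route as the paper: convert $\rho_t\geq\delta$ into the separation condition $\min_{i\in\Nint{t-1}}\|s_t-q_i\|^2\geq\delta/M_0'$ via Proposition~\ref{prop:dyn_reg}, then bound the number of such times using Lemma~\ref{lem:pidgeonhole}. Your explicit extraction of the $\epsilon$-separated subsequence of high-regret times and application of the lemma's contrapositive to that subsequence is in fact a slightly more careful justification of the final counting inequality than the paper's set-complement bookkeeping, but it is the same argument.
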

\begin{proof}
    From Proposition~\ref{prop:dyn_reg}, $\rho_t \leq
    M_0'\min_{i\in\Nint{t-1}}\|q_t - q_i\|^2$ for every
    $t\in \mathbb{N}\setminus\Nint{N-1}$, where
    $M_0'=L_C\left(1 + \frac{|\alpha|}{\alpha +
    \beta}\right)$. Consequently, for any $\delta > 0$,
    \begin{align}
        &\lvert \{ t \in [T] : \rho_t \geq \delta \}\rvert
        \nonumber \\
        &\leq \lvert \{ t \in [T] :
        M_0'\min_{i\in\Nint{t-1}}\|q_t - q_i\|^2 \geq
    \delta \}\rvert \nonumber \\
        &= \lvert \{ t \in [T] : \forall i\in\Nint{t-1}, 
        \|q_t - q_i\|^2 \geq \delta/M_0' \}\rvert \nonumber \\
        &= T - \lvert \{ t \in [T] : \exists i\in\Nint{t-1}, 
        \|q_t - q_i\|^2 < \delta/M_0' \}\rvert \nonumber \\
        &\leq T - \left(T -
        {\left(\diam(\mathcal{Z} \times \mathcal{U})\sqrt{\frac{d}{\delta/M_0'}}
    \right)}^d\right)=M_0 \delta^\frac{-d}{2}, \nonumber
    \end{align}
    with
    $M_0={\left({\diam(\mathcal{Z} \times \mathcal{U})\sqrt{M_0'd}}\right)}^{d}$.
    We have the inequality in the last step from
    Lemma~\ref{lem:pidgeonhole}.
\end{proof}
Corollary~\ref{corr:vol_bd} is crucial for addressing
Problem~\ref{prob_st:oc_p_regret}.

\subsection{Upper bound on the average regret}

We obtain an upper bound on the average regret
under the following assumption on $C$.
\begin{assum}[\textsc{$C$ has a norm-bounded gradient}]\label{assum:norm_bound}
            Let $\|\nabla C(z)\|\leq \GCmax$ for every $z
            \in \mathcal{Z}\times \mathcal{U}$ for some
            known $\GCmax > 0$.
\end{assum}
Assumption~\ref{assum:norm_bound} is mild due to the
smoothness assumption on $C$ and compactness of the sets $
\mathcal{Z}$ and $ \mathcal{U}$.  Specifically, in the
scenario where a gradient bound $ \GCmax$ is not known
\emph{a priori}, $ \GCmax = L_C \diam( \mathcal{Z}\times
\mathcal{U})$ satisfies Assumption~\ref{assum:norm_bound} by
the Mean Value Theorem, provided $C$ has at least one
stationary point over $ \mathcal{Z}\times \mathcal{U}$.

Next, we state the following auxiliary result to demonstrate
an upper bound on the average regret.
We provide the proof of Lemma~\ref{lem:ibp2} in
Appendix~\ref{app:lem_ibp2}.
\begin{lem}
    \label{lem:ibp2}
Let $\mu > 1$ and $A>0$. Suppose $\{n_k\}_{k \geq 1}$ is a
sequence of non-negative  real numbers with $\sum_{k=1}^\infty
n_k = T$, and $n_k \leq A e^{\mu k}$ for all $k$. Then, 
\[
\sum\nolimits_{k=1}^\infty n_k e^{-k} \leq \mu (\mu-1)^{-1}
A^{1/\mu} T^{1-1/\mu}.
\]
\end{lem}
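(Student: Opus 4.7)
The plan is to exploit the complementarity of the two constraints on $\{n_k\}$: the pointwise bound $n_k \leq Ae^{\mu k}$ is tight for small $k$, while the total-mass bound $\sum n_k = T$ dominates the large-$k$ tail. For a non-negative integer $K$ (to be chosen), I would write
\[
\sum_{k=1}^\infty n_k e^{-k} = \sum_{k=1}^K n_k e^{-k} + \sum_{k=K+1}^\infty n_k e^{-k}.
\]
On the head, $n_k e^{-k} \leq A e^{(\mu-1)k}$, so summing the geometric series and using the elementary inequality $e^{\mu-1}-1 \geq \mu-1$ (from $e^x \geq 1+x$ at $x = \mu-1$) yields
\[
\sum_{k=1}^K A e^{(\mu-1)k} \;\leq\; \frac{A\bigl(e^{(\mu-1)(K+1)}-e^{\mu-1}\bigr)}{e^{\mu-1}-1} \;\leq\; \frac{A e^{(\mu-1)(K+1)}}{\mu-1}.
\]
On the tail, factoring out $e^{-(K+1)}$ and invoking $\sum_{k>K} n_k \leq T$ gives the bound $T e^{-(K+1)}$.

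Writing $u = e^{K+1}$, the combined upper bound is $\phi(u) := A u^{\mu-1}/(\mu-1) + T/u$. Setting $\phi'(u)=0$ gives the unconstrained minimizer $u^\ast = (T/A)^{1/\mu}$, at which the two summands evaluate to $A^{1/\mu}T^{1-1/\mu}/(\mu-1)$ and $A^{1/\mu}T^{1-1/\mu}$, summing to exactly $\mu(\mu-1)^{-1} A^{1/\mu} T^{1-1/\mu}$. Thus the continuous optimization already matches the target bound.

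The main obstacle is the quantization: $u = e^{K+1}$ lies in the discrete set $\{e, e^2, e^3, \ldots\}$ and generically does not equal $u^\ast$. I would handle this by a case split. When $T < A e^\mu$, the naive estimate $\sum_k n_k e^{-k} \leq T/e$ (from $e^{-k} \leq e^{-1}$ and $\sum n_k = T$) already implies the claim, since $T < A e^\mu \leq A(e\mu/(\mu-1))^\mu$ rearranges to $T/e \leq \mu(\mu-1)^{-1} A^{1/\mu} T^{1-1/\mu}$. When $T \geq A e^\mu$, I would choose $K$ so that $e^{K+1}$ is the integer power of $e$ closest to $u^\ast$ and keep the sharper form $A(e^{(\mu-1)(K+1)} - e^{\mu-1})/(e^{\mu-1}-1)$ of the head sum; the negative correction $-Ae^{\mu-1}/(e^{\mu-1}-1)$, combined with the convexity of $\phi$, is the ingredient expected to absorb the rounding shift of $u$ (at worst by a factor of $e$) into the target constant $\mu/(\mu-1)$. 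Verifying this absorption cleanly is the delicate step of the proof.
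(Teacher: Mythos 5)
There is a genuine gap at the step you yourself flag as delicate, and the mechanism you propose for closing it cannot work. After you apply $e^{\mu-1}-1\geq\mu-1$, your bound is $\phi(u)=Au^{\mu-1}/(\mu-1)+T/u$ with $u=e^{K+1}$, and, as you compute, $\min_u\phi(u)$ equals the target $\mu(\mu-1)^{-1}A^{1/\mu}T^{1-1/\mu}$ \emph{exactly}. So the sublevel set of $\phi$ at the target level is the single point $u^\ast$, and any rounding of $u^\ast$ to a power of $e$ pushes $\phi$ strictly above the target; e.g.\ for $\mu=2$ and $u=u^\ast e^{-1/2}$ one gets $\left(e^{1/2}+\tfrac{1}{2}e^{-1/2}\right)A^{1/2}T^{1/2}\approx 1.95\,A^{1/2}T^{1/2}$ from the tail-plus-head combination before any correction, and more generally the excess is $\Theta\bigl(A^{1/\mu}T^{1-1/\mu}\bigr)$. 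The negative correction you retain, $-Ae^{\mu-1}/(e^{\mu-1}-1)$, is $O(A)$ uniformly in $T$, so it cannot absorb an excess that grows with $T$; convexity of $\phi$ only makes matters worse, since moving away from the unique minimizer increases $\phi$. Nor is ``round to the nearest power of $e$'' the right rule: there are parameter values where the nearer power fails and the farther one succeeds. What a correct finish requires is (a) \emph{not} discarding the factor $e^{\mu-1}-1$ in the geometric sum, so that the continuous minimum of the resulting $\tilde\phi(u)=Au^{\mu-1}/(e^{\mu-1}-1)+T/u$ sits a factor $\bigl((\mu-1)/(e^{\mu-1}-1)\bigr)^{1/\mu}<1$ below the target, and (b) a verification that the sublevel set $\{u:\tilde\phi(u)\leq \mu(\mu-1)^{-1}A^{1/\mu}T^{1-1/\mu}\}$ has multiplicative width at least $e$, so that it contains a point of $\{e,e^2,\dots\}$. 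This appears to be true but only barely (the logarithmic width tends to $1$ from above as $\mu\to\infty$), and it is precisely the computation your proposal does not carry out. Your treatment of the regime $T<Ae^{\mu}$ is correct.

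For comparison, the paper avoids the quantization issue entirely by passing to a continuous surrogate: it replaces $\{n_k\}$ by the step function $h(t)=n_k$ on $[k,k+1)$, integrates by parts against $e^{-t}$, and splits the resulting integral of $H(t)=\int_0^t h$ at the real number $t^\ast=\ln(\mu T)/\mu$, which can be taken exactly at the optimizer since no integrality constraint is present. Your head/tail decomposition is the discrete shadow of that split, which is why the continuous optimum reproduces the constant $\mu(\mu-1)^{-1}$; the price of staying discrete is exactly the rounding problem described above.
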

\begin{thm}[\textsc{Upper bound on average
    regret}]\label{thm:sharp_ubd}
    Suppose $n_{\mathcal{Z}} +
    n_{\mathcal{U}} \geq 3$ and $N=0$.
    Algorithm~\ref{algo:cov_method} with
    $\alpha=0$ and $\beta>0$ incurs an average
    regret $R_T \leq \widehat{M}T^{\frac{-2}{n_{\mathcal{Z}} +
    n_{\mathcal{U}}}}$, where $\widehat{M}$ is a
    positive constant that depends on $L_C$, $\GCmax$,
    $\alpha$, $\beta$, and the diameters and dimensions 
    of the sets $\mathcal{Z}$ and $\mathcal{U}$.
\end{thm}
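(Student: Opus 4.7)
The plan is to leverage the per-step regret bound from Proposition~\ref{prop:dyn_reg} together with the tail count from Corollary~\ref{corr:vol_bd} and the discrete summation estimate in Lemma~\ref{lem:ibp2}. With $\alpha=0,\beta>0$, the factor $|\alpha|/(\alpha+\beta)$ vanishes, so Proposition~\ref{prop:dyn_reg} collapses to $\rho_t \leq L_C\min_{i\in\Nint{t-1}}\|s_t-q_i\|^2$, which feeds directly into the volume-counting argument in Corollary~\ref{corr:vol_bd} and gives a tail bound of the form $|\{t\in\Nint{T}:\rho_t\geq\delta\}|\leq M_0\delta^{-d/2}$ with $d=n_{\mathcal{Z}}+n_{\mathcal{U}}$.

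Next I would use Assumption~\ref{assum:norm_bound} together with the Mean Value Theorem to obtain the uniform bound $\rho_t = C(z_t,u_t)-C(z_t,u_t^\ast)\leq \GCmax\cdot\diam(\mathcal{U})\triangleq C_{\max}$, so that every nonzero regret value lies in $(0,C_{\max}]$. This is what lets the geometric binning below actually cover all $t\in\Nint{T}$.

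The crux of the proof is to partition $(0,C_{\max}]$ into the geometric shells $I_k=(C_{\max}e^{-k},C_{\max}e^{-(k-1)}]$ for $k\geq 1$ and define $n_k=|\{t\in\Nint{T}:\rho_t\in I_k\}|$, padding (for instance the $k=\infty$ shell of zero-regret steps) so that $\sum_{k\geq 1}n_k=T$. Corollary~\ref{corr:vol_bd} forces $n_k\leq M_0(C_{\max}e^{-k})^{-d/2}=Ae^{\mu k}$ with $\mu=d/2$ and $A=M_0 C_{\max}^{-d/2}$. The dimensional hypothesis $n_{\mathcal{Z}}+n_{\mathcal{U}}\geq 3$ is precisely what gives $\mu>1$, so Lemma~\ref{lem:ibp2} applies and yields
\begin{equation*}
\sum_{k\geq 1}n_k e^{-k}\leq \tfrac{\mu}{\mu-1}A^{1/\mu}T^{1-1/\mu}.
\end{equation*}
Since $\rho_t\leq e\,C_{\max}e^{-k}$ whenever $t$ lies in shell $I_k$, one gets $\sum_{t=0}^{T}\rho_t\leq e\,C_{\max}\sum_{k\geq 1}n_k e^{-k}\leq \widetilde{M}\,T^{1-2/d}$. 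Dividing by $T-N=T$ yields the claimed bound $R_T\leq \widehat{M}\,T^{-2/(n_{\mathcal{Z}}+n_{\mathcal{U}})}$, where $\widehat{M}$ absorbs $L_C$, $\GCmax$, $\alpha$, $\beta$, and $\diam(\mathcal{Z}\times\mathcal{U})$ through $M_0$ and $C_{\max}$.

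The only delicate point is the alignment between the tail bound of Corollary~\ref{corr:vol_bd} and the hypothesis of Lemma~\ref{lem:ibp2}: the geometric base $e$ in the shells $I_k$ must be chosen so that both $n_k\leq A e^{\mu k}$ and $\sum n_k=T$ hold simultaneously, and the exponent $\mu=d/2$ must strictly exceed $1$ for the lemma to yield a sublinear bound, which is exactly the content of the assumption $n_{\mathcal{Z}}+n_{\mathcal{U}}\geq 3$. All remaining manipulations are routine algebra.
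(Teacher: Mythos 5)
Your proposal is correct and follows essentially the same route as the paper's proof: the uniform bound $\rho_t \leq \GCmax\diam(\mathcal{U})$ via the Mean Value Theorem, geometric shells with base $e$, the tail count from Corollary~\ref{corr:vol_bd} giving $n_k \leq A e^{kd/2}$, and Lemma~\ref{lem:ibp2} with $\mu = d/2 > 1$. Your remark about padding the bins so that $\sum_k n_k = T$ (to account for zero-regret steps) is a minor point the paper glosses over, and is harmless since the bound in Lemma~\ref{lem:ibp2} is monotone in $T$.
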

\begin{proof}
    Define $\Lambda = \GCmax
    \diam(\mathcal{U})$. By the Mean Value Theorem and
    Assumption~\ref{assum:C_Lip}, we
    have $\rho_t \leq \Lambda$ for all $t$. Thus, we can write
    \[
    \begin{aligned}
    \sum_{t=1}^T \rho_t &\leq \sum_{k=1}^\infty e^{-k+1}
    \Lambda \underbrace{\lvert \{ t \in [T] : \rho_t \in (e^{-k}
    \Lambda, e^{-k+1} \Lambda ]\}\rvert}_{\triangleq n_k} \\
    &= e \Lambda \sum_{k=1}^\infty n_k e^{-k}.
    \end{aligned}
    \]
    Define $d=n_{\mathcal{Z}} + n_{\mathcal{U}}$.  From
    Corollary~\ref{corr:vol_bd},
    \[
    \begin{aligned}
    n_k \leq \lvert{\{ t \in [T] : \rho_t > e^{-k}
    \Lambda\}}\rvert &
    \leq M_0 (e^{-k} \Lambda)^\frac{-d}{2}= M_0
    \Lambda^\frac{-d}{2} e^\frac{kd}{2},
    \end{aligned}
    \]
    where the
    constant 
    $M_0$ is determined by $L_C$ and the
    diameters of the sets $\mathcal{Z}$ and $\mathcal{U}$.
    Note that $\sum_{k=1}^\infty n_k = T$. Consequently, we
    apply Lemma~\ref{lem:ibp2} to the sequence $\{n_k\}$,
    with $\mu = d/2 > 1$ and $A = M_0 \Lambda^{-d/2} >
    0$, 
    \begin{align}
        R_T = \frac{1}{T}\sum_{t=1}^T \rho_t &\leq
        \frac{e\Lambda}{T}
    \sum_{k=1}^\infty n_k e^{-k} \leq
    \widehat{M}T^{-2/d}\label{eq:regret_ub_last_step}
    \end{align}
    for a constant $\widehat{M}$, as claimed.
\end{proof}
\begin{corr}[\textsc{Sublinear cumulative regret}]
    Suppose $n_{\mathcal{Z}} + n_{\mathcal{U}} \geq 3$ and
    $N=0$. Algorithm~\ref{algo:cov_method} with $\alpha=0$
    and $\beta>0$ incurs sublinear cumulative regret,
    $\sum_{t=1}^T\rho_t \leq \widehat{M}T^{1 -
        \frac{2}{n_{\mathcal{Z}} + n_{\mathcal{U}}}}$.
\end{corr}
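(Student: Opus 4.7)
The plan is to derive the stated bound on the cumulative regret directly from Theorem~\ref{thm:sharp_ubd}, which has already done all the heavy lifting. Since the corollary uses the same hypotheses ($n_{\mathcal{Z}} + n_{\mathcal{U}} \geq 3$, $N=0$, $\alpha=0$, $\beta>0$), Theorem~\ref{thm:sharp_ubd} immediately yields the average-regret bound $R_T \leq \widehat{M} T^{-2/d}$ with $d = n_{\mathcal{Z}} + n_{\mathcal{U}}$.

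Next, I would unwind the definition of average regret. When $N=0$, the normalization $\frac{1}{T-N}$ in \eqref{eq:R_T_defn} becomes $\frac{1}{T}$, so $R_T = \frac{1}{T}\sum_{t=1}^T \rho_t$ (matching the form used in the proof of Theorem~\ref{thm:sharp_ubd}, cf.\ \eqref{eq:regret_ub_last_step}). Multiplying both sides of the average-regret bound by $T$ gives
\begin{equation*}
\sum_{t=1}^T \rho_t \;=\; T \cdot R_T \;\leq\; T \cdot \widehat{M} T^{-2/d} \;=\; \widehat{M}\, T^{1 - \frac{2}{n_{\mathcal{Z}} + n_{\mathcal{U}}}}.
\end{equation*}
Finally, I would remark that because $n_{\mathcal{Z}} + n_{\mathcal{U}} \geq 3 > 2$, the exponent $1 - \frac{2}{n_{\mathcal{Z}} + n_{\mathcal{U}}}$ lies strictly in $(0,1)$, so the cumulative regret grows strictly slower than linearly in $T$, justifying the \emph{sublinear cumulative regret} descriptor.

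There is essentially no obstacle here: the corollary is a one-line rescaling of Theorem~\ref{thm:sharp_ubd} combined with the observation that $N=0$ reduces the averaging denominator to $T$. The only minor care needed is to keep the same constant $\widehat{M}$ and to note (perhaps parenthetically) that the sublinearity requires $d \geq 3$, which is exactly the standing hypothesis.
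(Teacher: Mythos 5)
Your proposal is correct and matches the paper's intent exactly: the corollary is stated without a separate proof precisely because it is the one-line rescaling of Theorem~\ref{thm:sharp_ubd} that you describe, multiplying the average-regret bound $R_T \leq \widehat{M}T^{-2/d}$ by $T$ (with $N=0$ so the averaging denominator is $T$, as in \eqref{eq:regret_ub_last_step}). Your added remark that $d \geq 3$ makes the exponent lie in $(0,1)$ is a correct and harmless justification of the word ``sublinear.''
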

\begin{rem}
    Theorem~\ref{thm:sharp_ubd} requires $n_{\mathcal{Z}} +
    n_{\mathcal{U}} > 2$ in order to utilize
    Lemma~\ref{lem:ibp2} with $\mu=(n_{\mathcal{Z}} +
    n_{\mathcal{U}})/2>1$. For $n_{\mathcal{Z}} +
    n_{\mathcal{U}} = 2$, the cumulative regret of
    Algorithm~\ref{algo:cov_method}  with $\alpha=0$
    and $\beta>0$ is also sublinear.
    See Appendix~\ref{app:diff_lem} for more details.
\end{rem}

\subsection{Algorithm-independent lower bound on the average
regret when solving
Problem~\ref{prob_st:oc_p}}

Next, we demonstrate a lower bound on the average regret for any control algorithm $\mathscr{A}$ that solves
Problem~\ref{prob_st:oc_p} with no initial data ($N=0$) and
identity context map $\IdentityMap$
($z_t=\IdentityMap(x_t)=x_t$ and $n_{ \mathcal{Z}}=n_{
\mathcal{X}}$). Specifically, we show that for
every control algorithm $\mathscr{A}$ there exists a
problem instance \ProbInstZero{} that suffers from an average regret with a budget-dependent lower bound. We call such
problem instances \emph{resisting}, since they ensure that
some cost function $C$ and dynamics
$F$ exist that satisfy 
Assumptions~\ref{assum:nonoise}--~\ref{assum:norm_bound},
without explicitly constructing them.

\begin{defn}[\textsc{$(\xi,T)$-resisting property
    to control algorithm $ \mathscr{A}$}]\label{defn:xi}
    Given $\xi>0$, a budget $T$, and an initial state
    $x_0\in \mathcal{X}$. A problem instance
    \ProbInst{\IdentityMap}{\emptyset} is said to be
    $(\xi,T)$-resisting to control algorithm $ \mathscr{A}$, provided \\
    1) 
    the trajectory ${\{x_t\}}_{t\in \Nint{T}}$ resulting
    from the application of the control $u_t$ chosen by the
    algorithm $ \mathscr{A}$ at each time step $t\in
    \mathbb{N}$ satisfies $\|x_{t+1} - x_i\| > \xi$ for
    every $i\in \Nint{t}$, and\\ 
    2) the oracle returns $C(z_t, u_t)=0$ and $\nabla C(z_t, u_t)=0$ at each time step $t\in \Nint{T}$.
\end{defn}

\begin{thm}[\textsc{Lower bound on average
        contextual 
    regret}]\label{thm:sharp_lbd}
    Given $T\in
    \mathbb{N}$ with $T>0$, define\linebreak
    $\xi=
    \diam(\mathcal{X})/(\sqrt{n_{\mathcal{X}}}T^\frac{1}{n_{\mathcal{X}}})$,
    and let $ \mathcal{X}$ and $ \mathcal{U}$
    have strictly positive diameters.  For any control
    algorithm $\mathscr{A}$ that solves
    Problem~\ref{prob_st:oc_p} under 
    Assumptions~\ref{assum:nonoise}--~\ref{assum:norm_bound},\\
    1) there exists a problem instance \ProbInstZero{} that
    is $(\xi,T)$-resisting for the control
    algorithm $\mathscr{A}$,\\
    2) the average regret incurred by $
    \mathscr{A}$ on the $(\xi,T)$-resisting problem instance
    is $R_T \geq
    \nu{T^{\frac{-2}{n_{\mathcal{X}} + n_{\mathcal{U}}}}},$
    where the constant $\nu$ depends only on $L_C$,
    $n_{\mathcal{X}}$, and $n_{\mathcal{U}}$.
\end{thm}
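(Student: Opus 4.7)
My plan is to build an adversarial $(F,C)$ for any given algorithm $\mathscr{A}$. In the first phase I construct $F$ and the trajectory online while returning zero cost/gradient to $\mathscr{A}$; in the second phase I retrofit a smooth cost $C$ consistent with those zero responses but harboring deep ``valleys'' away from the queried points, thereby forcing large regret. For Step~1 (establishing Part~1), I would simulate $\mathscr{A}$ step by step: at each time $t$, after observing the committed action $u_t$, I pick $x_{t+1}\in\mathcal{X}$ so that $\|x_{t+1}-x_i\|>\xi$ for every $i\in\Nint{t}$. The value $\xi=\diam(\mathcal{X})/(\sqrt{n_{\mathcal{X}}}\,T^{1/n_{\mathcal{X}}})$ is calibrated via Lemma~\ref{lem:pidgeonhole} so that $\mathcal{X}$ can host at least $T$ pairwise $\xi$-separated points, making this online choice feasible for every $t\leq T$. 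I then define $F$ by interpolation on the (necessarily distinct) pairs $(x_t,u_t)$ and extend arbitrarily; the oracle always reports $C(z_t,u_t)=0,\,\nabla C(z_t,u_t)=0$.

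For the regret lower bound, let $d=n_{\mathcal{X}}+n_{\mathcal{U}}$ and let $r=cT^{-1/d}$ for a small constant $c$ to be chosen. I would set $C=-\sum_{t=1}^T \phi_t$, where each $\phi_t$ is a smooth radial bump supported on the ball of radius $r$ centered at some $(z_t,u_t^\ast)\in\mathcal{Z}\times\mathcal{U}$, with peak depth $\asymp L_C r^2$, zero gradient on its support boundary, and Hessian norm at most $L_C$. Provided that the supports are pairwise disjoint and each avoids every queried pair $(z_i,u_i)$, the function $C$ inherits Lipschitz-gradient constant $L_C$, vanishes with zero gradient at every queried pair (matching the oracle's responses), and has norm-bounded gradient. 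The feasibility of a suitable $u_t^\ast$ for every $t$ is the key geometric step: using the $\xi$-separation of the $z$-coordinates, only $O\bigl((r/\xi)^{n_{\mathcal{X}}}\bigr)$ prior bump centers or queried pairs have $z$-component within $2r$ of $z_t$, and each forbids $u_t^\ast$ from an $n_{\mathcal{U}}$-ball in $\mathcal{U}$ of radius at most $2r$; the total forbidden $\mathcal{U}$-volume is therefore $O\bigl(r^{n_{\mathcal{U}}}(r/\xi)^{n_{\mathcal{X}}}\bigr)=O(r^d/\xi^{n_{\mathcal{X}}})=O(r^d T)$ after substituting the calibrated $\xi$. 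Choosing $c$ small enough relative to $\diam(\mathcal{U})$ makes this strictly smaller than $\mathrm{vol}(\mathcal{U})$, so a feasible $u_t^\ast$ exists at every step and the greedy construction succeeds.

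The regret estimate is then immediate: $C(z_t,u_t)=0$ while $C(z_t,u_t^\ast)\leq -c'L_C r^2$ for a universal $c'>0$, whence $\rho_t\gtrsim L_C r^2\propto L_C T^{-2/d}$ for every $t$, and averaging yields $R_T\geq\nu T^{-2/(n_{\mathcal{X}}+n_{\mathcal{U}})}$, matching the upper bound of Theorem~\ref{thm:sharp_ubd}. The main obstacle I anticipate is the volume accounting in the previous paragraph: a naive disjointness condition $r<\xi$ (which would make the bumps purely ``intra-slice'') forces the weaker exponent $-2/n_{\mathcal{X}}$, and it is only the combined forbidden-volume estimate that uses both $n_{\mathcal{X}}$ and $n_{\mathcal{U}}$ which sharpens the bound to the target exponent $-2/(n_{\mathcal{X}}+n_{\mathcal{U}})$. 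A minor additional technicality is choosing a concrete smooth bump profile (for instance an affinely rescaled $(r^2-\|\cdot\|^2)_+^2$-type kernel) so that both Assumption~\ref{assum:C_Lip} and Assumption~\ref{assum:norm_bound} transfer cleanly to the sum $C$; this is standard.
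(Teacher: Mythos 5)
Your proposal is correct in its essentials and reaches the theorem by a genuinely different construction of the adversarial cost. The first phase (an online choice of $F$ producing a $\xi$-separated trajectory while the oracle returns zeros) and the decisive volume computation are the same as the paper's: because the contexts are $\xi$-separated, only $O((\delta/\xi)^{n_{\mathcal{X}}})$ queried pairs have context within $\delta$ of $z_t$, so the controls they forbid occupy volume $O(\delta^{d}\xi^{-n_{\mathcal{X}}})$ in $\mathcal{U}$, and the calibration $\xi \propto T^{-1/n_{\mathcal{X}}}$ turns this into $O(\delta^{d}T)$, forcing $\delta \propto T^{-1/d}$ --- this is exactly the paper's Lemma~\ref{lem:disjointball}, and you correctly identify it as the step that buys the exponent $-2/d$ rather than $-2/n_{\mathcal{X}}$. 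Where you diverge is the cost function itself: the paper invokes a Whitney regularized distance function (Lemma~\ref{lem:whitneyB}) to manufacture a single smooth $C$ with $C(s)\leq -\kappa L_C\,\dist(s,\mathcal{S})^2$ globally and vanishing first-order data on $\mathcal{S}$, after which it only needs \emph{one} point $(z_t,v_t)$ at distance at least $\delta$ from $\mathcal{S}$ per time step; you instead sum explicit, disjointly supported radial bumps of radius $r\propto T^{-1/d}$ and depth $\asymp L_C r^2$, placed greedily. Your route is more elementary and self-contained (no extension-theory machinery), at the price of extra bookkeeping: the bump supports must also avoid one another (your forbidden-volume budget covers this), and one must check that a sum of disjointly supported bumps inherits the Lipschitz-gradient constant, which follows by splitting any segment into subsegments each lying in a single support. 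Two small cautions. First, your appeal to Lemma~\ref{lem:pidgeonhole} to justify that $\mathcal{X}$ hosts $T$ pairwise $\xi$-separated points runs in the wrong logical direction --- that lemma bounds separated families from \emph{above}; what is needed is a packing (grid) argument, which is what the paper sketches. Second, the constant $c$ you pick must absorb $\diam(\mathcal{X})$ and the volume of $\mathcal{U}$, so your final $\nu$, like the paper's, in fact carries a dependence on the diameters as well as on $L_C$, $n_{\mathcal{X}}$, and $n_{\mathcal{U}}$.
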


We provide the proof of Theorem~\ref{thm:sharp_lbd} in
Appendix~\ref{app:thm_sharp_lbd}. The key insight is to show
that an
algorithm-dependent problem instance always exist
for which the oracle provides no useful information
regarding the unknown cost function irrespective of the control input 
selected by the algorithm $ \mathscr{A}$
($(\xi,T)$-resisting property), but causes the algorithm to
suffer from non-trivial regret at each time step. 

Surprisingly, the worst-case average regret incurred by any
one-step algorithm that solves Problem~\ref{prob_st:oc_p}
depends only on the sum of the state/context dimension and
control dimension, and does not distinguish between problems
with high state/context dimension and low control dimension
and vice versa.  Theorem~\ref{thm:sharp_lbd} also shows that
any one-step control algorithm that solves
Problem~\ref{prob_st:oc_p} will suffer from the \emph{curse
of dimensionality} --- the average regret
diminishes more slowly if either the context dimension or
the control dimension or both increase (see
Figure~\ref{fig:bounds}).  

Theorems~\ref{thm:sharp_ubd} and~\ref{thm:sharp_lbd} show
that the regret analysis of Algorithm~\ref{algo:cov_method}
with $\alpha=0$ and $\beta>0$ is tight with respect to $T$
up to the constants for $n_{\mathcal{Z}} + n_{\mathcal{U}}
\geq 3$.  As expected, the worst-case average 
regret of Algorithm~\ref{algo:cov_method}
diminish with increasing $T$.  

\begin{figure}
    \centering
    \includegraphics[width=1\linewidth]{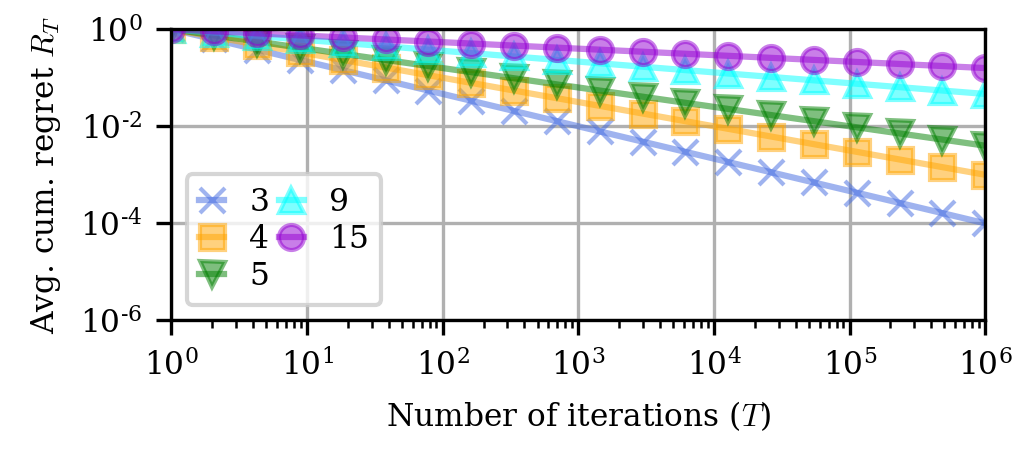} 
    \caption{Worst-case average 
        regret $R_T$ of
        Algorithm~\ref{algo:cov_method} ($\alpha=0$ and
        $\beta>0$) with varying number of
        maximum time steps $T$ for various choices of
        $n_{\mathcal{Z}} + n_\mathcal{U}$
        (Theorems~\ref{thm:sharp_ubd}
        and~\ref{thm:sharp_lbd}). For the sake of
        illustration, we have the normalized constants.}
    \label{fig:bounds}
\end{figure}

\section{Numerical experiments}

We show the efficacy of our approach using two numerical
experiments. In the first experiment, we compare the
performance of the proposed approach with existing
approaches to solve \eqref{prob:OC_P}. In the second
experiment, we demonstrate the utility of our approach in
landing an aircraft using data from a single finite-horizon
trajectory.

We used Python to perform all computations on an Intel
i7-4600U CPU with 4 cores, 2.1 GHz clock rate and 7.5 GB RAM.
We used \texttt{CVXPY}~\cite{cvxpy},
ECOS~\cite{domahidi2013ecos}, and
\texttt{GUROBI}~\cite{gurobi} for convex optimization,
\texttt{GPyOpt}~\cite{gpyopt2016} for Bayesian optimization,
and \texttt{optimize} from \texttt{scipy} for non-convex
optimization problems.

\begin{figure}[p]
    \newcommand{\trimValues}{0 20 0 20} \centering
    \includegraphics[width=0.83\linewidth,Trim=\trimValues,clip]{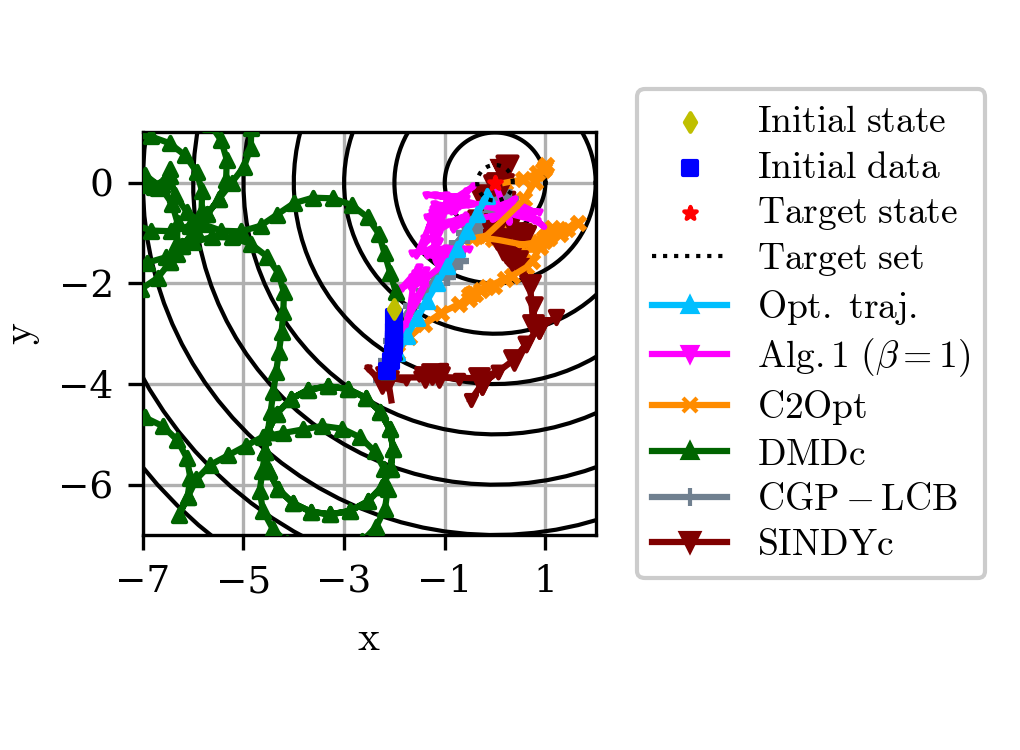}
    \includegraphics[width=0.85\linewidth]{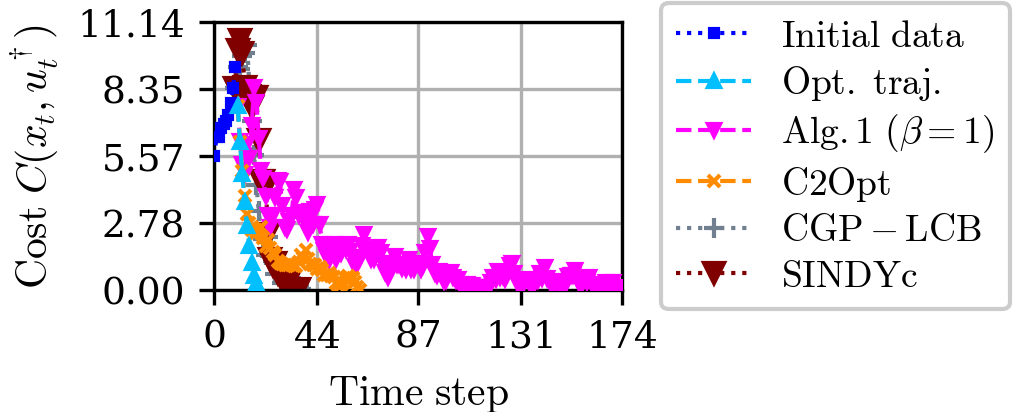}
    \includegraphics[width=0.85\linewidth]{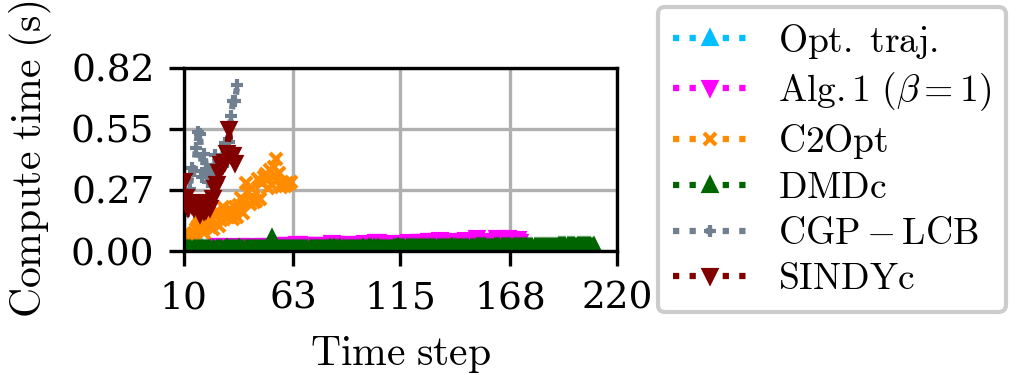}
    \caption{On-the-fly control for unicycle dynamics with
        $N=10$; \DMDc{}, Algorithm~\ref{algo:cov_method}
        ($\beta=1$), and \ACCalgo{} provide
        the faster controller synthesis, while
    \texttt{CGP-LCB}~\cite{krause2011contextual} and
\texttt{SINDYc}~\cite{kaiser2018sparse} reach the target in
fewer time steps. We omit \DMDc{} in plot of cost values for
sake of visualization of the one-step costs of other
methods.}
\label{fig:unicycle_traj_case1}
\end{figure}
\begin{figure}[p]
    \centering
    \newcommand{\trimValues}{0 15 0 25}
    \includegraphics[width=0.83\linewidth,Trim=\trimValues, clip]{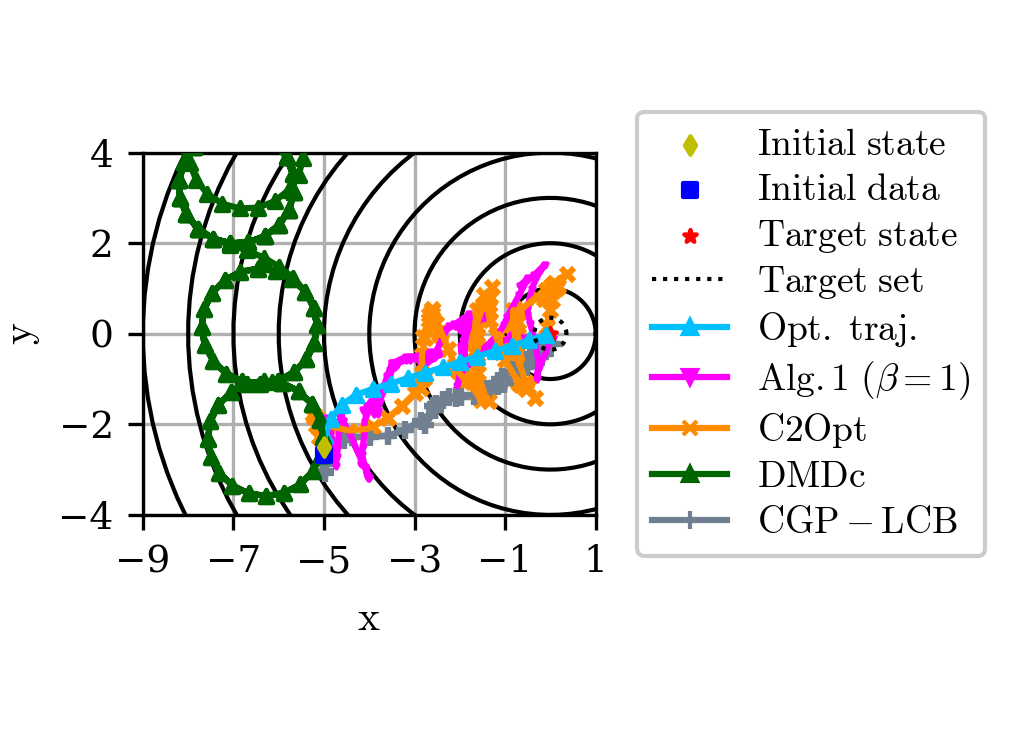}
    \includegraphics[width=0.85\linewidth]{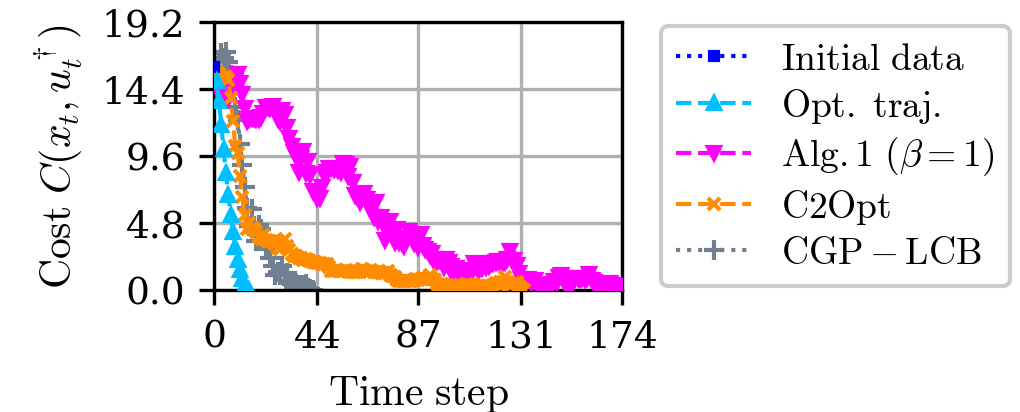}
    \includegraphics[width=0.85\linewidth]{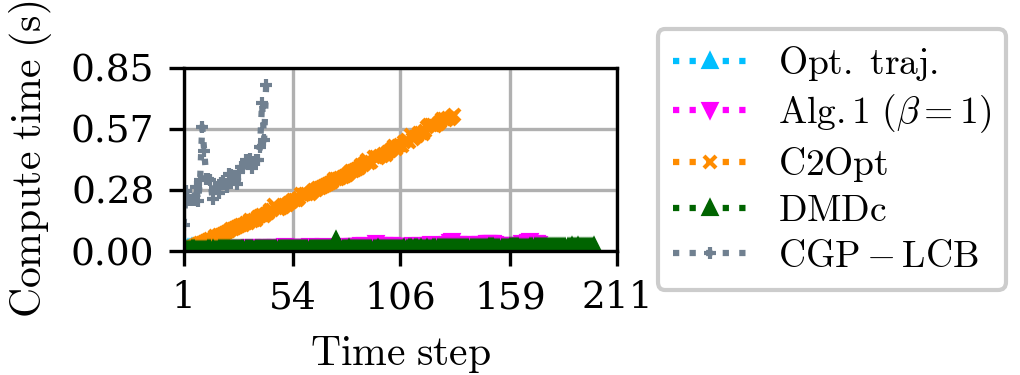}
    \caption{On-the-fly control for unicycle dynamics with
        $N=1$; Algorithm~\ref{algo:cov_method}
        ($\beta=1$), and \ACCalgo{} provide
        the faster controller synthesis, while
    \texttt{SINDYc}~\cite{kaiser2018sparse} fails at the
first time step. We omit \DMDc{} in plot of cost values for
sake of visualization of the one-step costs of other methods.}
\label{fig:unicycle_traj_case2}
\end{figure}

\begin{figure}
    \centering
    \newcommand{\trimValues}{0 10 0 0}
    \includegraphics[width=0.7\linewidth,
    Trim=\trimValues, clip]{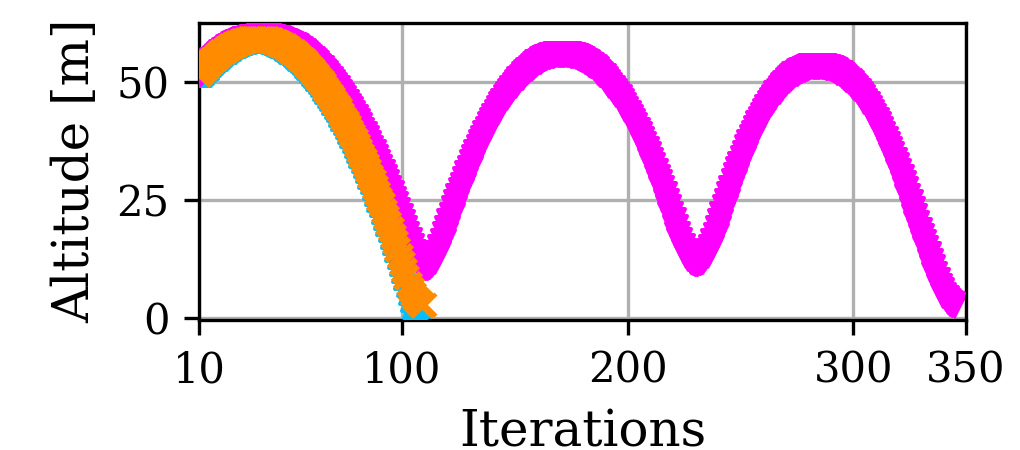}
    \includegraphics[width=0.7\linewidth]{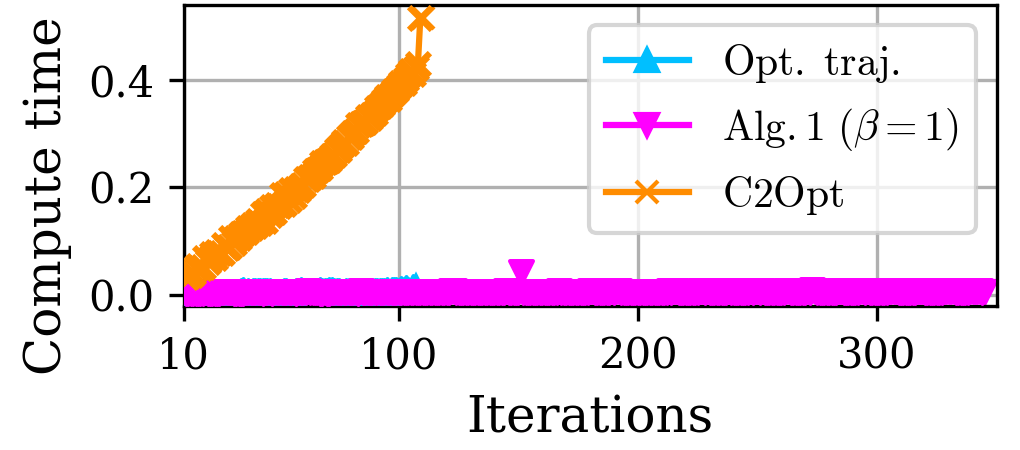}
    \caption{Damaged aircraft landing example: Altitude and
        compute time (s) over time.
 \ACCalgo{} (Algorithm~\ref{algo:cov_method} with 
$\alpha=\beta=0.5$) recovers the optimal trajectory, but has
a slower controller synthesis than Algorithm~\ref{algo:cov_method}
with $\beta=1$.}
\label{fig:aircraft_1}
\end{figure}
\begin{figure*}
    \centering
    \newcommand{\trimValuesLeft}{50 0 103 59}
    \newcommand{\trimValuesOthers}{70 0 83 59}
    \includegraphics[width=0.24\linewidth, Trim=\trimValuesLeft, clip]{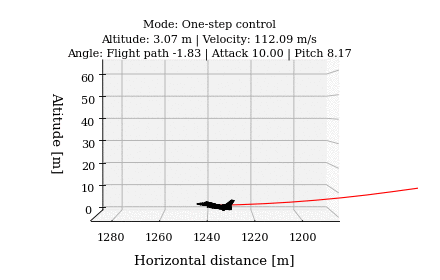} 
    \includegraphics[width=0.24\linewidth, Trim=\trimValuesOthers, clip]{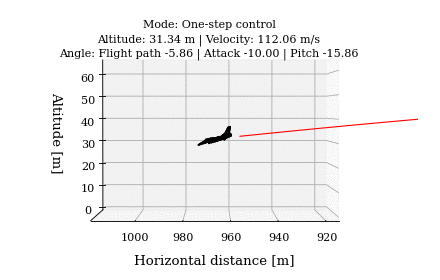} 
    \includegraphics[width=0.24\linewidth, Trim=\trimValuesOthers, clip]{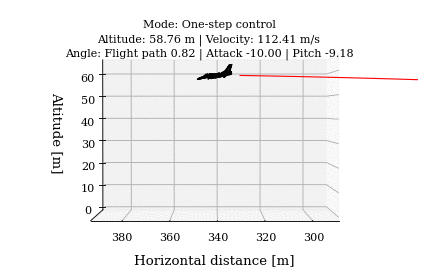} 
    \includegraphics[width=0.24\linewidth, Trim=\trimValuesOthers, clip]{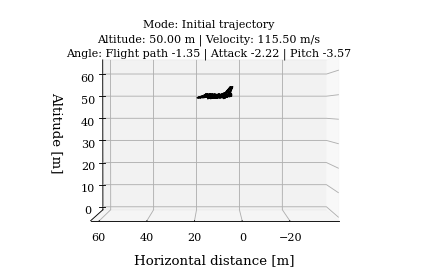}
    \caption{Landing of a damaged aircraft using \ACCalgo{}
        (Algorithm~\ref{algo:cov_method} with
    $\alpha=\beta=0.5$). The images are arranged with $t$
increasing from right to left.} \label{fig:aircraft_2}
\end{figure*}

\subsection{Unicycle dynamics: Comparison study}

Consider the problem of driving a unicycle system to origin
under control constraints. The true continuous-time dynamics
are given by,
\begin{align}
    \dot{p}_\mathrm{x} = v
    \cos(\theta),\quad\dot{p}_\mathrm{y}= v
    \sin(\theta),\quad\dot{\theta} =
    \omega,\label{eq:uni_cts_time}
\end{align}
with state $x=[p_\mathrm{x},p_\mathrm{y}, \theta]$ (position
in $\mathrm{x}$-$\mathrm{y}$ plane and heading) and
control input $
\overline{u}=[v, w]\in \mathcal{U}$. Using a zero-order hold
with a sampling time of $T_s=0.1$, we obtain a discrete-time
nonlinear dynamics $x_{t+1} = f(\overline{x}_t,
\overline{u}_t)$ for the purposes of simulation and data
generation. Since the state $x_t$ is non-Euclidean due
to the heading term $\theta$, we defined the context
$z={[p_\mathrm{x},p_\mathrm{y},\sin(\theta),\cos(\theta)]}^\top\in
\mathbb{R}^2\times [-1,1]^2$. We define the control input set as $
\mathcal{U}=[-4,4]\times[-\pi,\pi]$. We encode the goal of
driving the unicycle to the origin by seeking to regulate
the squared-distance of the unicycle position from the origin to zero. We define $C(z_t,u_t)={(p_\mathrm{x})}_{t+1}^2 +
{(p_\mathrm{y})}_{t+1}^2$ (see
\eqref{eq:one_step_ex}), and
choose $L_C=10$. We compare the proposed approach with 
$\alpha=\beta=0.5$ (refered to as \ACCalgo{}
in~\cite{vinod2020convexified}) and $\beta=1$ with
\texttt{CGP-LCB}~\cite{krause2011contextual},
\texttt{SINDYc}~\cite{kaiser2018sparse},
and \DMDc{}~\cite{korda2018linear}.
We relaxed the target state (origin) to a circle of radius $0.25$.
We applied \DMDc{} with $z_t=x_t$.  For more details on the
problem setup and choice of parameters for \texttt{SINDYc}
and \texttt{CGP-LCB}, see~\cite{vinod2020convexified}. 

We solve Problem~\ref{prob_st:oc_p} for two cases:
\begin{enumerate}
    \item initial state
        $x_0={[-2,-2.5,\pi/2]}^\top$ and $N=10$,
    and
    \item initial state $x_0 =
        {[-5,-2.5,\pi/2]}^\top$ and $N=1$.
\end{enumerate}
We generate the initial data using randomly generated
(uniform) control inputs for $N$ time steps respectively,
sampled from the set $[-4,0]\times [-\pi,\pi]\subset
\mathcal{U}$. The initial random trajectory drives the
unicycle away from the target by design.

Figures~\ref{fig:unicycle_traj_case1}
and~\ref{fig:unicycle_traj_case2} show the on-the-fly
control trajectories, the associated one-step costs, and the
compute times executed by an optimal one-step
controller that has knowledge of the underlying
dynamics (\texttt{Opt. traj.}), \texttt{CGP-LCB},
\DMDc{}, \texttt{SINDYc}
(when possible), and Algorithm~\ref{algo:cov_method}
(including \ACCalgo{}). We observe that the knowledge of
underlying dynamics are sufficient for one-step optimal
control to complete the task. Additionally, 
Algorithm~\ref{algo:cov_method} is superior to
\texttt{CGP-LCB} and \texttt{SINDYc} in computation time
required at each time step, while driving the unicycle
successfully to the target. On the other hand, \DMDc{} has
the least amount of computational costs, but it
fails to reach the target in both cases (potentially
due to the nonlinearity of the system, a one-step control
horizon setup, and severe limitation in data). Additionally, in Case 2 ($N=1$),  \texttt{SINDYc} suffers from numerical issues. In
both of the cases, \texttt{CGP-LCB} arrives at the goal in
the fewest time steps, but it has the highest computational
costs. Between two parameter choices of
Algorithm~\ref{algo:cov_method}, \texttt{C2Opt}
($\alpha=\beta$) arrives at the target faster than the
($\beta=1$) setting. This may be due to the fact that
Algorithm~\ref{algo:cov_method} guarantees sublinear
cumulative regret in the latter setting, and
therefore must spend more time in exploration phase.

\subsection{Landing a damaged aircraft}

Next, we consider the problem of landing a damaged aircraft.
For the purposes of simulation and collecting data, we use
the continuous-time, point mass longitudinal model of an
aircraft subject to the gravity force $mg$ with mass $m$,
acceleration due to gravity $g$, thrust $T$, lift $L$, and
drag $D$. The resulting equations of motion are,
\begin{subequations}
\begin{align}
    \dot{V}&=\frac{1}{m}(T\cos(\alpha) - D(\alpha, V)
    - mg\sin(\gamma))\\
    \dot{\gamma}&=\frac{1}{mV}(T\sin(\alpha) + L(\alpha, V)
    - mg\cos(\gamma))\\
    \dot{h}&=V\sin(\gamma)
\end{align}\label{eq:plane_dyn}%
\end{subequations}%
with states as the heading velocity $V$, the flight path
angle $\gamma$, and the altitude $h$, and control inputs as the
angle of attack $\alpha$ and thrust $T$. We use the
parameters for the DC9-30~\cite{bayen2007aircraft}, and
obtain the lift and drag forces are,
\begin{subequations}
\begin{align}
    L(\alpha, V) &= 68.6 (1.25 + 4.2 \alpha) V^2\\
    D(\alpha, V) &= (2.7 + 3.08(1.25 + 4.2 \alpha)^2) V^2
\end{align}%
\end{subequations}%
At an altitude of $h_0=50$ m, heading velocity $V_0 = 2 *
V_\mathrm{stall}=115.5$ m/s, we assume a damage on the aircraft
produces additive nonlinear terms $T\sin(\alpha)/(2m)$ and
$T\cos(\alpha)/(2mV)$ affecting the dynamics of velocity and
flight path angle. We discretize \eqref{eq:plane_dyn} in
time with a sampling time of $0.1$ s. We encode the goal of
landing the aircraft successfully by seeking to regulate the
altitude and flight path angle to zero. We define
$C(z_t,u_t)={h}_{t+1}^2 + {\alpha}_{t+1}^2$ (see
\eqref{eq:one_step_ex}), and select
$L_C=10$. We generate the initial data using randomly
generated control inputs for $N=10$ time steps. The
control input
space $ \mathcal{U}=[0.1, 0.3]\times (-10^\circ, 10^\circ)$,
with the thrust $T$ specified as a fraction of the maximum
thrust $T_{\max}=1.6\times 10^5$ N.

Figure~\ref{fig:aircraft_1} shows the regulation of the
altitude over time as well as the computation
time. Algorithm~\ref{algo:cov_method} completes the task
successfully for both choices of the parameter --- $\beta=1$
and $\alpha=\beta=0.5$ (\ACCalgo{}). We observe that
\ACCalgo{} recovers the optimal trajectory and takes
significantly fewer time steps than
Algorithm~\ref{algo:cov_method} with $\beta=1$. 
The ``oscillatory" closed-loop behavior and the resulting delay in reaching the target may be attributed to the fact that Algorithm~\ref{algo:cov_method} in $\beta=1$ setting spends more time in exploration phase in order to guarantee sublinear cumulative regret.
Figure~\ref{fig:aircraft_2} shows the different stages of
landing when using \ACCalgo{}.

\begin{rem}
    In both of the experiments considered, we found that 
    \texttt{GUROBI} was able to solve the non-convex
    quadratic program associated with
    Algorithm~\ref{algo:cov_method} faster than the convex
    programming-based approach
    \texttt{C2Opt}~\cite{vinod2020convexified}. We attribute
    the observed computational advantage to the heuristics
    used by \texttt{GUROBI}, and do not expect the advantage
    to hold for all one-step control instances of
    \eqref{prob:OC_P}.
\end{rem}

\section{Conclusion}

We propose a data-driven algorithm to solve contextual
optimization problems that arise in on-the-fly control of
unknown dynamical systems. We provide tractable formulations
for its implementation based on its convexity-like
properties. We establish that the proposed
approach enjoys sublinear cumulative regret with
respect to the number of control time steps, and prove that
the proposed regret analysis is tight. We also empirically
show that the proposed approach is typically much faster
than existing state-of-the-art, which enables real-time
application.

\appendix

\subsection{Proof of Lemma~\ref{lem:approx}}
\label{app:lem_approx}

    For any smooth function $f$, the following inequalities
    hold for any $s \in \mathcal{S}$,
    \begin{subequations}
        \begin{align}
            f(s) &\leq \ell(s;q_i,f) +
            \frac{L_f}{2}\|s - q_i\|^2,\qquad \forall i\in\Nint{t}
            \label{eq:approx_major_interim},\\
            f(s) &\geq \ell(s;q_i,f)
            - \frac{L_f}{2}\|s - q_i\|^2,\qquad \forall i\in\Nint{t}
            \label{eq:approx_minor_interim}.
        \end{align}\label{eq:approx_interim}%
    \end{subequations}%
    We obtain the data-driven majorant $f^+$
    \eqref{eq:C_plus_defn} and minorant $f^-$
    \eqref{eq:C_minus_defn}
    via finite minimum of \eqref{eq:approx_major_interim}
    and finite maximum of \eqref{eq:approx_minor_interim}
    over $i\in\Nint{t}$ respectively. By construction, these
    piecewise-quadratic functions coincide with $f(q_i)$ at
    $s=q_i$ for every $i\in\Nint{t}$. Also, \eqref{eq:approx_interim} implies
    \begin{subequations}
    \begin{align}
        f(s) - \left({\ell(s;q_i, f) -
    \frac{L_f}{2}\|s - q_i\|^2}\right)&\leq L_f\|s -
        q_i\|^2\label{eq:approx_minor_interim2},\\
        \left({\ell(s;q_i, f) +
        \frac{L_f}{2}\|s - q_i\|^2}\right) - f(s)&\leq L_f\|s -
        q_i\|^2\label{eq:approx_major_interim2}.
    \end{align}\label{eq:approx_interim2}%
    \end{subequations}
    We upper bound
    $f - f_t^-$ and $f^+_t - f$ by computing the finite minimum
    of the RHS in \eqref{eq:approx_minor_interim2} and
    \eqref{eq:approx_major_interim2} over $i\in\Nint{t}$.
    
\subsection{Proof of the volume counting lemma
(Lemma~\ref{lem:pidgeonhole})}
\label{app:lem_pidgeonhole}

    The set $\mathcal{S} \subset \mathbb{R}^d$ is covered by a
    hypercube of side $\diam(\mathcal{S})$.
    For any $\epsilon>0$, 
    the number of hypercubes of side
    $\sqrt{\frac{\epsilon}{d}}$ that covers the hypercube of
    side $\diam(\mathcal{S})$
    is given by $\left\lceil {\frac{\diam(
            \mathcal{S})^d}{\left(\epsilon/d\right)^\frac{d}{2}}
    }\right\rceil$. Note that $T$ is at least one more than
    this minimum number. By the \emph{pidgeonhole principle}, at
    least one of the hypercubes with side
    $\sqrt{\frac{\epsilon}{d}}$ must have at least two
    points. However, the maximum separation allowed between two
    points within such a hypercube is $\sqrt{\epsilon}$. Thus,
    for some $i,j\in\Nint{T}$ with $i\neq j$, we have
    $\|q_i - q_j\|\leq \sqrt{\epsilon}$. We complete the
    proof with $t\triangleq\max(i,j)-1\in\Nint{T-1}$. \qed

\subsection{Proof of Lemma~\ref{lem:ibp2}}
\label{app:lem_ibp2}

We first introduce an auxiliary result (Lemma~\ref{lem:ibp})
that will help in proving Lemma~\ref{lem:ibp2}.

\begin{lem}\label{lem:ibp}
Let $\mu > 1$ and $T>0$. Suppose $h : [0,\infty) \rightarrow [0,\infty)$ is an integrable function satisfying $\int_0^\infty h(t) dt = T$ and $h(t) \leq e^{\mu t}$ for all $t$. Then
\[
\int_0^\infty h(t) e^{-t} dt \leq \mu (\mu-1)^{-1} T^{1-1/\mu}.
\]
\end{lem}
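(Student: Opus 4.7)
The plan is to split the integral at a threshold $t^{\ast}$ and use the two hypotheses, one on each half, then pick $t^{\ast}$ to balance the resulting terms. The underlying tradeoff is that the pointwise bound $h(t) \leq e^{\mu t}$ is most informative when $t$ is small, whereas the total-mass bound $\int h = T$ is most useful when combined with the rapid decay of $e^{-t}$ at large $t$. Neither bound alone suffices, so an intermediate cutoff is needed.

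First I would write, for an as-yet-unspecified $t^{\ast} > 0$,
\[
\int_0^\infty h(t)e^{-t}\,dt \;=\; \int_0^{t^{\ast}} h(t) e^{-t}\,dt \;+\; \int_{t^{\ast}}^\infty h(t) e^{-t}\,dt.
\]
On the first piece, I would substitute $h(t) \leq e^{\mu t}$ so that $h(t)e^{-t} \leq e^{(\mu - 1)t}$, and integrate to get a bound of $(e^{(\mu-1)t^{\ast}} - 1)/(\mu-1)$. On the second piece, I would pull out the monotone factor $e^{-t} \leq e^{-t^{\ast}}$ and control the remaining integral by $\int_{t^{\ast}}^\infty h(t)\,dt \leq T$, yielding $T e^{-t^{\ast}}$. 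Thus, for every $t^{\ast} > 0$,
\[
\int_0^\infty h(t) e^{-t}\,dt \;\leq\; \frac{e^{(\mu-1)t^{\ast}}}{\mu - 1} \;+\; T e^{-t^{\ast}}.
\]

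Next, I would optimize this upper bound over $t^{\ast}$. Differentiating the right-hand side and setting the derivative equal to zero gives $e^{\mu t^{\ast}} = T$, i.e., $t^{\ast} = \mu^{-1}\log T$. For this choice one checks $e^{(\mu-1)t^{\ast}} = T^{1-1/\mu}$ and $T e^{-t^{\ast}} = T^{1-1/\mu}$, so the two summands combine to $(1/(\mu-1) + 1)T^{1-1/\mu} = \mu(\mu-1)^{-1} T^{1-1/\mu}$, exactly the claimed bound.

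The only subtlety is the regime $T < 1$, in which the stationary point $\mu^{-1}\log T$ is negative and thus infeasible in the splitting argument above. In that case I would instead use the trivial bound $\int_0^\infty h(t)e^{-t}\,dt \leq \int_0^\infty h(t)\,dt = T$, and verify that $T \leq \mu(\mu-1)^{-1} T^{1-1/\mu}$ whenever $T \leq 1$ and $\mu > 1$, which reduces to $T^{1/\mu} \leq \mu/(\mu-1)$, immediate from $T^{1/\mu} \leq 1 < \mu/(\mu-1)$. I expect this corner case to be the only non-automatic part of the argument; the main split-and-optimize step is essentially forced once one notices the complementary nature of the two hypotheses.
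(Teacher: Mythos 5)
Your proof is correct. It is the same split-and-balance argument as the paper's, but streamlined: the paper first integrates by parts to replace $\int_0^\infty h(t)e^{-t}\,dt$ by $\int_0^\infty H(t)e^{-t}\,dt$ with $H(t)=\int_0^t h(s)\,ds$, then splits that integral at $t^\ast=\ln(\mu T)/\mu$ using $H(t)\leq \mu^{-1}e^{\mu t}$ on the near part and $H(t)\leq T$ on the far part. You skip the antiderivative entirely and split $\int h(t)e^{-t}\,dt$ directly at $t^\ast=\mu^{-1}\ln T$, bounding $h(t)e^{-t}\leq e^{(\mu-1)t}$ near the origin and pulling out $e^{-t}\leq e^{-t^\ast}$ against the mass bound in the tail. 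Your route avoids the boundary-term argument $\lim_{t\to\infty}H(t)e^{-t}=0$ and the factor-of-$\mu^{-1}$ bookkeeping; the paper's detour through $H$ in principle yields the slightly sharper intermediate constant $\mu^{-1/\mu}\bigl(1+(\mu-1)^{-1}\bigr)$, but it discards this improvement in the final step, so both arguments deliver exactly the stated bound $\mu(\mu-1)^{-1}T^{1-1/\mu}$. Your handling of the $T<1$ corner case via the trivial bound $\int h e^{-t}\leq T\leq \mu(\mu-1)^{-1}T^{1-1/\mu}$ matches the paper's.
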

\begin{proof}
If $T < 1$, we have $T\leq T^{1-\frac{1}{\mu}}$ and
\begin{align}
\int_0^\infty h(t) e^{-t} dt \leq \int_0^\infty h(t) dt = T
\leq \frac{\mu}{\mu-1} T^{1-1/\mu}.
\label{eq:lem_ibp2_tleq1}
\end{align}

Now suppose $T \geq 1$. Set $H(t) = \int_0^t h(s) ds$. Observe that $H(t) \leq T$ for all $t$, and $H(0) = 0$. Using integration by parts,
\[
\begin{aligned}
\int_0^\infty h(t) e^{-t} dt &= H(0) - \lim_{t \rightarrow \infty} H(t) e^{-t} + \int_0^\infty H(t) e^{-t} dt \\
&= \int_0^\infty H(t) e^{-t} dt.
\end{aligned}
\]
For $t>0$, we have $H(t) \leq \int_0^{t} e^{\mu s} ds =
\mu^{-1} (e^{\mu t} - 1) \leq \mu^{-1} e^{\mu
t}$. Set $t^* := \ln(\mu T)/\mu > 0$. Then
\begin{align}
\int_0^\infty & H(t) e^{-t} dt \nonumber \\
              &\leq \mu^{-1}  \int_0^{t^*} e^{\mu t} e^{-t}
              dt + \int_{t^*}^\infty T e^{-t} dt
              \label{eq:main_term}\\
              & = \mu^{-1} (\mu-1)^{-1} (e^{(\mu-1) t^*} -
              1) + T e^{-t^*} \nonumber \\
              &=  \mu^{-1} (\mu-1)^{-1} ((\mu
              T)^{(\mu-1)/\mu} - 1) + T (\mu T)^{-1/\mu} \nonumber \\
              &\leq \mu^{-1/\mu} (1+(\mu-1)^{-1})
              T^{1-1/\mu} \leq \mu (\mu-1)^{-1} T^{1-1/\mu},\nonumber
\end{align}
as desired.
\end{proof}

We are now ready to prove Lemma~\ref{lem:ibp2}.

By normalization, we may assume $A=1$. Indeed, if we make the substitutions $n_k/A \rightarrow n_k$ and $T/A \rightarrow T$, both the hypotheses and conclusion of Lemma~\ref{lem:ibp2} will remain valid.

Define a step function $h : [0,\infty) \rightarrow [0,\infty)$, where $h(t) = 0$ for $t \in [0,1)$, and $h(t) = n_k$ for $t \in [k,k+1)$, $k \in \mathbb{N}$. For $t \in [k,k+1)$, $k \in \mathbb{N}$, we have
\[
h(t) = n_k \leq e^{\mu k} \leq e^{\mu t}.
\]
Furthermore, we have $\int_0^\infty h(t) dt = \sum_{k=1}^\infty n_k = T$. We may thus apply Lemma \ref{lem:ibp} to the function $h$. We find that
\[
\int_0^\infty h(t) e^{-t} dt \leq \mu (\mu-1)^{-1} T^{1-1/\mu}.
\]
But then, since $e^{-t}$ is decreasing,
\[
\int_0^\infty h(t) e^{-t} dt = \sum_{k=1}^\infty n_k \int_{k}^{k+1} e^{-t} dt \geq \sum_{k=1}^\infty n_k e^{-k}.
\]
Hence, combining the previous inequalities,
\[
\sum_{k=1}^\infty n_k e^{-k} \leq \mu (\mu-1)^{-1} T^{1-1/\mu}.
\]
This completes the proof.
\qed

\subsection{Proof of Theorem~\ref{thm:sharp_lbd}}
\label{app:thm_sharp_lbd}

We first introduce three auxiliary results
(Lemmas~\ref{lem:whitneyB},~\ref{lem:whitneyC},
and~\ref{lem:disjointball}) that will help
in proving Theorem~\ref{thm:sharp_lbd}.  

Since the context map is an identity map, we have $z_t=x_t$
at every time step $t$, $ \mathcal{Z}= \mathcal{X}$, and
$n_{\mathcal{Z}}=n_{ \mathcal{X}}$. We continue the
derivation with $ \mathcal{Z}$ to remain consistent in
notation.

For brevity, we define $d=n_\mathcal{Z} + n_\mathcal{U}$.
For any subset $\mathcal{S} \subset \mathbb{R}^d$ and $s \in
\mathbb{R}^d$, we define the distance function as
$\dist(s,\mathcal{S}) = \inf_{w \in \mathcal{S}} \| w - s
\|$.

\begin{lem}[Thm. 2 of Ch. VI, Sec.  2.1
    in \cite{stein1970singular}]\label{lem:whitneyB}
Given a closed set $\mathcal{S} \subset \mathbb{R}^d$, there
exists a non-negative function $\rho : \mathbb{R}^d \setminus \mathcal{S}
\rightarrow [0,\infty)$ with continuous derivatives up to
all orders ($\rho$ is a $C^\infty$ function), and constants
$m_1,m_2 > 0$ determined by $d$ such that $$m_1
\dist(s,\mathcal{S}) \leq \rho(s) \leq m_2
\dist(s,\mathcal{S}).$$ 
For all multiindices $\alpha$ and constants $C_\alpha$ that depend only on $d$ and $\alpha$, the higher
derivatives satisfy 
$\left|{\frac{\partial^\alpha}{\partial
s^\alpha}\rho(s)}\right| \leq C_\alpha
\dist(s,\mathcal{S})^{1 - |\alpha|}$ for all $s \in
\mathbb{R}^d \setminus \mathcal{S}$.
\end{lem}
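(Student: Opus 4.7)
The plan is to prove this via a Whitney decomposition of the open set $\mathbb{R}^d \setminus \mathcal{S}$, a classical construction in harmonic analysis. First, I would decompose $\mathbb{R}^d \setminus \mathcal{S}$ into a countable collection of closed dyadic cubes $\{Q_k\}$ with pairwise disjoint interiors, chosen to satisfy the key Whitney property: $\diam(Q_k) \leq \dist(Q_k, \mathcal{S}) \leq 4\,\diam(Q_k)$ for each $k$. This is achieved by starting from the standard dyadic grid on $\mathbb{R}^d$, and retaining exactly those cubes whose diameter is within a prescribed dyadic range of their distance to $\mathcal{S}$. A simple stopping-time argument using the nestedness of dyadic cubes shows that the collection tiles $\mathbb{R}^d \setminus \mathcal{S}$.

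Next, I would construct a smooth partition of unity subordinate to a mild dilation of this decomposition. For each $k$, let $Q_k^*$ denote the concentric cube with side length expanded by a fixed factor (say $9/8$). The family $\{Q_k^*\}$ retains the bounded-overlap property: each $s \in \mathbb{R}^d \setminus \mathcal{S}$ lies in at most $M_d$ of the $Q_k^*$, where $M_d$ depends only on $d$. By convolving indicator-type functions with a compactly supported mollifier, I would obtain $C^\infty$ bump functions $\eta_k$ supported in $Q_k^*$ with $\sum_k \eta_k \equiv 1$ on $\mathbb{R}^d \setminus \mathcal{S}$ and the scale-invariant estimates $\bigl|\partial^\alpha \eta_k(s)\bigr| \leq A_\alpha \diam(Q_k)^{-|\alpha|}$ for every multiindex $\alpha$, with $A_\alpha$ determined by $d$ and $\alpha$.

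I would then define $\rho(s) = \sum_k \eta_k(s)\,\diam(Q_k)$. Smoothness of $\rho$ follows from the smoothness of each $\eta_k$ together with local finiteness of the sum. The bilateral bound $m_1\,\dist(s,\mathcal{S}) \leq \rho(s) \leq m_2\,\dist(s,\mathcal{S})$ follows from the Whitney property: whenever $\eta_k(s) > 0$, the point $s$ lies in $Q_k^*$, which forces $\diam(Q_k)$ to be comparable to $\dist(s,\mathcal{S})$ within universal constants. Averaging over the at most $M_d$ indices contributing at $s$ yields the claim with $m_1, m_2$ depending only on $d$.

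The main technical obstacle is the higher-derivative estimate. Since $\sum_k \eta_k \equiv 1$, all partial derivatives of the constant function vanish, so $\partial^\alpha \rho(s) = \sum_k \bigl(\partial^\alpha \eta_k(s)\bigr)\bigl(\diam(Q_k) - \dist(s,\mathcal{S})\bigr)$ up to analogous rewrites that trade the bare $\diam(Q_k)$ for a small quantity comparable to $\dist(s,\mathcal{S})$; this cancellation is essential because a naive estimate would lose a factor of $\dist(s,\mathcal{S})$. Combining $\bigl|\partial^\alpha \eta_k(s)\bigr| \leq A_\alpha \diam(Q_k)^{-|\alpha|}$ with $|\diam(Q_k) - \dist(s,\mathcal{S})| \lesssim \diam(Q_k)$ and summing over the bounded-overlap family yields $\bigl|\partial^\alpha \rho(s)\bigr| \leq C_\alpha \diam(Q_k)^{1-|\alpha|} \asymp C_\alpha \dist(s,\mathcal{S})^{1-|\alpha|}$. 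The delicate point is tracking the cancellation and verifying that all constants depend only on $d$ and $\alpha$, which is precisely the content of the classical construction in Stein's text.
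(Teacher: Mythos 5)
The paper does not prove this lemma at all: it is quoted verbatim (as a black box) from Theorem 2 of Ch.~VI, Sec.~2.1 of Stein's \emph{Singular Integrals and Differentiability Properties of Functions}, and your sketch is essentially the classical proof given there --- Whitney decomposition of $\mathbb{R}^d \setminus \mathcal{S}$ into dyadic cubes with $\diam(Q_k) \leq \dist(Q_k,\mathcal{S}) \leq 4\,\diam(Q_k)$, a subordinate partition of unity with scale-invariant derivative bounds, and $\rho = \sum_k \eta_k \,\diam(Q_k)$. The outline is correct and all constants do come out depending only on $d$ and $\alpha$ via the bounded-overlap count and the Whitney comparability constants.

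One inaccuracy in your reasoning, though it does not break the proof: the cancellation $\sum_k \partial^\alpha \eta_k \equiv 0$ is \emph{not} needed for the derivative estimate here. Bounding directly, $\left|{\partial^\alpha \rho(s)}\right| \leq \sum_{k \,:\, s \in Q_k^*} A_\alpha\, \diam(Q_k)^{-|\alpha|}\, \diam(Q_k)$, and since at most $M_d$ terms are nonzero and each contributing $\diam(Q_k)$ is comparable to $\dist(s,\mathcal{S})$, this already yields $C_\alpha\, \dist(s,\mathcal{S})^{1-|\alpha|}$ with no loss. The subtract-a-constant trick is what one needs in the Whitney \emph{extension} theorem, where the coefficients $\diam(Q_k)$ are replaced by Taylor polynomials and the naive bound genuinely degrades; for the regularized distance the homogeneity of the coefficient $\diam(Q_k)$ does all the work.
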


The function $\rho$ in the previous lemma is called a
``Whitney regularized distance function'' for $\mathcal{S}$.
We use $\|\cdot\|_{op}$ to denote the operator norm.

\begin{lem}\label{lem:whitneyC}
    Given $L_C > 0$, $\tau\in \mathbb{N}$, and a closed set
    of finite points $\mathcal{S}= {\{(z_t,u_t)\}}_{t \in [\tau]}
    \subset\mathbb{R}^d$, there exists a smooth function $C
    : \mathbb{R}^d \rightarrow (-\infty,0]$ such that $K_C
    \leq L_C$, $C(z_t,u_t)=0$ and $\nabla C(z_t,u_t)=0$ for
    every $t\in\Nint{\tau}$, and 
    $C(s )\leq -\kappa L_C \dist(s, \mathcal{S})^2 $ and
    $\|\nabla C(s)\|\leq L_C \diam(\mathcal{Z} \times
    \mathcal{U})$ for all $s \in \mathbb{R}^d$, where $\kappa >
    0$ is a constant that depends only on $d$.
\end{lem}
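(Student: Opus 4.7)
My plan is to construct $C$ as a scaled, negative squared Whitney regularized distance function associated with $\mathcal{S}$. Invoking Lemma~\ref{lem:whitneyB} for the closed finite set $\mathcal{S}$ produces a smooth $\rho : \mathbb{R}^d \setminus \mathcal{S} \to [0, \infty)$ comparable to $\dist(\cdot, \mathcal{S})$, with derivatives bounded as $|\partial^\alpha \rho| \leq C_\alpha\, \dist(\cdot, \mathcal{S})^{1 - |\alpha|}$. For a constant $c > 0$ depending only on $d$ (to be fixed later), I would define
\[
C(s) \triangleq -c L_C\, \rho(s)^2 \text{ for } s \notin \mathcal{S}, \quad C(s) \triangleq 0 \text{ for } s \in \mathcal{S}.
\]
The estimate $\rho \leq m_2 \dist(\cdot, \mathcal{S})$ immediately gives $C = O(\dist(\cdot,\mathcal{S})^2)$, so $C$ extends continuously with $C \leq 0$, $C(z_t, u_t) = 0$, and differentiability at each $(z_t, u_t)$ with $\nabla C(z_t, u_t) = 0$ (using also $\|\nabla \rho\| \leq C_1$ so that $\nabla C = -2 c L_C\, \rho\, \nabla \rho = O(\dist(\cdot, \mathcal{S}))$ vanishes on $\mathcal{S}$).

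For the Lipschitz gradient bound, I differentiate once more and apply Lemma~\ref{lem:whitneyB}:
\[
\|\nabla^2 C(s)\|_{op} \leq 2 c L_C \bigl(\|\nabla \rho(s)\|^2 + \rho(s)\, \|\nabla^2 \rho(s)\|_{op}\bigr) \leq 2 c L_C (C_1^2 + m_2 C_2)
\]
for $s \notin \mathcal{S}$. Choosing $c = 1/(2(C_1^2 + m_2 C_2))$ yields $\|\nabla^2 C\|_{op} \leq L_C$ pointwise off $\mathcal{S}$; since $\mathcal{S}$ is a finite set in $\mathbb{R}^d$ with $d \geq 2$, any two points of $\mathbb{R}^d$ can be joined by a polygonal path avoiding $\mathcal{S}$, and integrating the Hessian bound along such paths (together with continuity of $\nabla C$ across $\mathcal{S}$) promotes this into the global Lipschitz gradient bound $K_C \leq L_C$. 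The pointwise quadratic lower bound follows from $\rho \geq m_1 \dist(\cdot, \mathcal{S})$: $C(s) \leq -c m_1^2 L_C\, \dist(s, \mathcal{S})^2$, so we can take $\kappa = c m_1^2$, which depends only on $d$.

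The hard part will be the global gradient bound, since the direct estimate $\|\nabla C(s)\| \leq 2 c L_C m_2 C_1\, \dist(s, \mathcal{S})$ scales with $\dist(s, \mathcal{S})$ and is too weak for $s$ far from $\mathcal{Z} \times \mathcal{U}$. I would overcome this by composing with a smooth non-decreasing concave saturation $\psi : [0, \infty) \to [0, \infty)$ equal to the identity on $[0,\diam(\mathcal{Z} \times \mathcal{U})^2]$ and uniformly bounded beyond it, with $\psi'$ and $\psi''$ globally bounded, replacing $C$ with $s \mapsto -cL_C\, \psi(\rho(s)^2)$. Near $\mathcal{S}$, $\psi$ acts as the identity, so the interpolation conditions, the Hessian bound, and the quadratic lower bound on $C$ in the relevant region $\{s : \dist(s,\mathcal{S}) \leq \diam(\mathcal{Z} \times \mathcal{U})\}$ are all inherited from the uncut version; the saturation caps $\|\nabla C\|$ and $\|\nabla^2 C\|_{op}$ uniformly on $\mathbb{R}^d$, producing the required bound $\|\nabla C\| \leq L_C \diam(\mathcal{Z} \times \mathcal{U})$ and, after a final rescaling of $c$ to absorb the constants introduced by $\psi', \psi''$, preserving $K_C \leq L_C$. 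Tracking these $d$-dependent constants so that all four conclusions hold simultaneously is the main bookkeeping step.
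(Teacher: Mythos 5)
Your construction is the same as the paper's: take the Whitney regularized distance $\rho$ for $\mathcal{S}$ from Lemma~\ref{lem:whitneyB}, set $C = -cL_C\rho^2$ (extended by zero on $\mathcal{S}$), bound $\|\nabla^2 C\|_{op}$ via $\nabla^2(\rho^2) = 2\nabla\rho\otimes\nabla\rho + 2\rho\nabla^2\rho$ together with the estimates $\rho \leq m_2\dist(\cdot,\mathcal{S})$ and $\|\nabla^2\rho\|_{op}\lesssim \dist(\cdot,\mathcal{S})^{-1}$, and choose $c$ depending only on $d$; the quadratic upper bound with $\kappa = cm_1^2$ is identical. The one place you diverge is the gradient bound: the paper simply applies the Mean Value Theorem, i.e.\ $\|\nabla C(s)\| = \|\nabla C(s)-\nabla C(z_t,u_t)\| \leq L_C\,\dist(s,\mathcal{S}) \leq L_C\,\diam(\mathcal{Z}\times\mathcal{U})$ for $s\in\mathcal{Z}\times\mathcal{U}$, which is the only region where the bound is invoked in the proof of Theorem~\ref{thm:sharp_lbd}. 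Your saturation construction is a valid way to make $\|\nabla C\|$ bounded on all of $\mathbb{R}^d$, but note that it necessarily gives up the bound $C(s)\leq-\kappa L_C\dist(s,\mathcal{S})^2$ far from $\mathcal{S}$ (as you concede by restricting it to the relevant region): a function with $C=0$ on $\mathcal{S}$ and globally bounded gradient decays at most linearly, so the two conclusions of the lemma cannot both hold literally for all $s\in\mathbb{R}^d$. The statement should be read as requiring both bounds only on $\mathcal{Z}\times\mathcal{U}$, where the paper's shorter MVT argument (and your unsaturated $C$) already delivers everything needed, making the saturation machinery unnecessary.
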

\begin{proof}
Let $\rho: \mathbb{R}^d \setminus \mathcal{S} \rightarrow
[0,\infty)$ be the Whitney regularized distance function for
$\mathcal{S}$. Then, by Lemma~\ref{lem:whitneyB}, we have some
positive constants $m_1, m_2, M_1$, and $M_2$ such that the
following observations hold for all $s \in \mathbb{R}^d \setminus \mathcal{S}$:
a) $m_1
\dist(s,\mathcal{S}) \leq \rho(s) \leq m_2
\dist(s,\mathcal{S})$, b) $\|\nabla \rho(s)\| \leq M_1$ and
c) $\|
\nabla^2 \rho(s)\|_{op} \leq M_2 \dist(s,\mathcal{S})^{-1}$.

Consider the function $\eta: \mathbb{R}^d \rightarrow
\mathbb{R}$, given by $\eta(s_t) = 0$ for all $t$, and
$\eta(s)  = \rho(s)^2$ for $s \in \mathbb{R}^d \setminus
\mathcal{S}$. We know $\eta$ is a continuously
differentiable function and $\|\nabla \eta(s_t)\|= 0$ for all
$t\in\Nint{\tau}$, since for some positive constants
$M_3,M_4$, $\rho(s) \leq M_3\|s-s_t\|$  implies $\eta(s)
\leq M_4\|s-s_t\|^2$ as $s\to s_t$ for any $t \in
\Nint{\tau}$ by a). By the product rule, $
\nabla^2 \eta(s) = 2 \nabla \rho(s) \otimes \nabla \rho(s) +
2 \rho(s) \nabla^2 \rho(s) $ for all $s \in \mathbb{R}^d
\setminus \mathcal{S}$. Thus, $\nabla^2 \eta(s)$ is
twice-differentiable with 
\[
\begin{aligned}
\|\nabla^2 \eta(s)\|_{op} &\leq 2 \| \nabla \rho(s) \|^2 + 2 \rho(s) \| \nabla^2 \rho(s) \|_{op} \\
& \leq 2 M_1^2 + 2 m_2 \dist(s,\mathcal{S}) M_2
\dist(s,\mathcal{S})^{-1} \\
&= 2M_1^2 + 2m_2M_2\triangleq M_5.
\end{aligned}
\]
By~\cite[Lem. 1.2.2]{nesterov2018lectures}, $\nabla \eta$ is
$M_5$-Lipschitz. Note that $M_5$ depends only on $d$.

Finally, define $C(s) = - (L_C/M_5) \eta(s)$. Then $\nabla
C$ is $L_C$-Lipschitz with $C(s_t) = 0$ and $\nabla C(s_t) =
0$ for all $0 \leq t \leq \tau$, by construction.  By the
Mean Value Theorem, we also have $\|\nabla C(s)\| \leq L_C
\diam( \mathcal{Z}\times \mathcal{U})$, as desired. 
From property a) of $\rho$ and definition of $\eta$, $
\eta(s) \geq m_1^2
\dist(s,\mathcal{S})^2$ for all $s \in \mathbb{R}^d$. 
Consequently,  $C(s) \leq - \kappa L_C
\dist(s,\mathcal{S})^2$ for all $s \in \mathbb{R}^d$ with
$\kappa = m_1^2/M_5$. Here, $\kappa$ depends only on $d$.
\end{proof}

\newcommand{\VolBallnu}{V(n_\mathcal{U})}
Let $\VolBallnu$ denote the volume of the unit ball in
$n_{\mathcal{U}}$-dimensional Euclidean space, $
\mathrm{Vol}( \mathcal{S})$ denote the volume of a set $
\mathcal{S}$, and define $\mathrm{Ball}(c, r)=\{s\in
\mathbb{R}^d:\|s -c \| \leq r\}\subset \mathbb{R}^d$.
\begin{lem}\label{lem:disjointball} Given $\xi$ and $T$. Let $ \mathcal{Z}$ and $
    \mathcal{U}$ have strictly positive diameters, and
    $\mathcal{S}= {\{(z_t,u_t)\}}_{t \in [T]} \subset
    \mathbb{R}^d$ be a sequence of context-control pairs such
    that $\{z_t\}_{t \in [T]}$ is $\xi$-separated, i.e., $\|
    z_t - z_{t'} \| \geq \xi$ for $t,t'\in \Nint{T}$ and $t
    \neq t'$. For every ball
    $B_{\mathcal{Z}}\subset\mathcal{Z}$ of radius $\delta
    \leq {\left({\omega \VolBallnu
    \xi^{n_{\mathcal{Z}}}}\right)}^{1/d}$ with 
    $\omega\triangleq (2
    \sqrt{n_{\mathcal{Z}}})^{n_{\mathcal{Z}}} + 1$, there
    exists a ball $B_{\mathcal{U}}\subset\mathcal{U}$ of
    radius $\delta$ such that every point in $ \mathcal{S}$
    lies outside $B_{\mathcal{Z}} \times B_{\mathcal{U}}$.
    In other words, $ \mathcal{S}\cap (B_{\mathcal{Z}}\times
    B_{\mathcal{U}})=\emptyset$.
\end{lem}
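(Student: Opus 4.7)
My plan is a two-stage argument: first count the sample points whose context component lands inside $B_\mathcal{Z}$, and then use this count to carve out an admissible ball $B_\mathcal{U}$ via a volume/avoidance argument on $\mathcal{U}$.

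First I would bound $N := |\{t \in [T] : z_t \in B_\mathcal{Z}\}|$. Because $\{z_t\}$ is $\xi$-separated, a cube-covering argument in the spirit of Lemma~\ref{lem:pidgeonhole} applies directly: the ball $B_\mathcal{Z}$ of radius $\delta$ is contained in an axis-aligned cube of side $2\delta$, which I partition into subcubes of side just below $\xi/\sqrt{n_\mathcal{Z}}$ so that each subcube has diameter strictly less than $\xi$. Each subcube can therefore contain at most one $z_t$, yielding $N \leq \lceil 2\delta\sqrt{n_\mathcal{Z}}/\xi \rceil^{n_\mathcal{Z}} \leq (2\sqrt{n_\mathcal{Z}})^{n_\mathcal{Z}}(\delta/\xi)^{n_\mathcal{Z}} + 1$. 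The definition $\omega = (2\sqrt{n_\mathcal{Z}})^{n_\mathcal{Z}} + 1$ is designed so that the additive constant can be absorbed into the leading term in the regimes that matter.

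Next I would reformulate the conclusion: $\mathcal{S} \cap (B_\mathcal{Z} \times B_\mathcal{U}) = \emptyset$ is equivalent to $u_t \notin B_\mathcal{U}$ for every $t \in I := \{t : z_t \in B_\mathcal{Z}\}$, since the other indices are automatically excluded. Writing $B_\mathcal{U} = \mathrm{Ball}(c_\mathcal{U}, \delta)$, this is in turn equivalent to asking that the center $c_\mathcal{U}$ lie in $\mathcal{U}$, be at distance at least $\delta$ from $\partial\mathcal{U}$, and avoid the union $\bigcup_{t \in I} \mathrm{Ball}(u_t, \delta)$. The forbidden set has Lebesgue measure at most $N\cdot V(n_\mathcal{U})\delta^{n_\mathcal{U}}$, which by the packing bound above is at most $\omega V(n_\mathcal{U})\delta^d/\xi^{n_\mathcal{Z}}$. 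I would then invoke the hypothesis $\delta^d \leq \omega V(n_\mathcal{U}) \xi^{n_\mathcal{Z}}$ to conclude that this forbidden measure is controlled, and use a Lebesgue-measure comparison against the admissible-center region (nonempty by the strictly positive diameter of $\mathcal{U}$) to produce a valid center $c_\mathcal{U}$, and hence $B_\mathcal{U}$.

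The main technical hurdle I anticipate is tracking the constants precisely so that the packing bound on $N$ together with the hypothesis on $\delta$ is tight enough to force the forbidden volume to be strictly smaller than the admissible-center volume in $\mathcal{U}$. In particular, matching the expression $\omega V(n_\mathcal{U})\xi^{n_\mathcal{Z}}$ — where the unit-ball constant $V(n_\mathcal{U})$ appears and no volumetric information about $\mathcal{U}$ is given beyond positivity of its diameter — with the geometry of the inner region of $\mathcal{U}$ is where I expect the delicate bookkeeping to occur, and likely dictates exactly why the constant $\omega = (2\sqrt{n_\mathcal{Z}})^{n_\mathcal{Z}} + 1$ (and not a smaller one) is used.
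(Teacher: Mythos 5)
Your plan is essentially the paper's own proof: bound $\lvert\{t : z_t \in B_{\mathcal{Z}}\}\rvert$ by $\omega(\delta/\xi)^{n_{\mathcal{Z}}}$ using the $\xi$-separation together with the cube-covering/pigeonhole argument of the volume counting lemma, then compare the Lebesgue measure of $\bigcup_{t}\mathrm{Ball}(u_t,\delta)$ (at most $\omega V(n_{\mathcal{U}})\delta^{d}\xi^{-n_{\mathcal{Z}}}$) against $\mathrm{Vol}(\mathcal{U})$ to extract an uncovered center $v$ and take $B_{\mathcal{U}}=\mathrm{Ball}(v,\delta)$. The ``delicate bookkeeping'' you flag in that final comparison is genuinely present in the paper as well --- its proof simply asserts $\omega V(n_{\mathcal{U}})\delta^{d}\xi^{-n_{\mathcal{Z}}}\leq \mathrm{Vol}(\mathcal{U})$ from the stated hypothesis on $\delta$ without further justification --- so you have not missed any idea that the paper supplies.
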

\begin{proof}
    Given $B_{\mathcal{Z}}$, define the set $\mathcal{I}
    \subset [T]$ by $\mathcal{I} = \{ t \in [T] : z_t \in
    B_{\mathcal{Z}} \}$, the time steps of the contexts in $
    \mathcal{S}$ belonging to $B_{\mathcal{Z}}$ of radius
    $\delta$. 
    \newcommand{\OmegaCover}{\bigcup_{t \in \mathcal{I}}
    \mathrm{Ball}(u_t, \delta)}

    To show that $ \mathcal{S}\cap (B_{\mathcal{Z}}\times
    B_{\mathcal{U}})=\emptyset$, we claim that, for the
    given choice of $\delta$, there always exists
    $v \in \mathcal{U} \setminus \OmegaCover$, i.e., the set
    $\mathcal{U} \setminus \OmegaCover$ is non-empty. Since $v \notin
    \OmegaCover$, we have $\| u_t - v \| > \delta$ for all $t
    \in \mathcal{I}$, which implies that $u_{t} \notin
    B_{\mathcal{U}}\triangleq \mathrm{Ball}(v,\delta)$
    for all $t \in \mathcal{I}$. On the other hand, for $t
    \in [T] \setminus \mathcal{I}$, $z_t \notin
    B_{\mathcal{Z}}$ by the definition of $ \mathcal{I}$.
    Consequently, for any $t \in [T]$, we have $(z_t,u_t)
    \notin B_{\mathcal{Z}} \times B_{\mathcal{U}}$.

    We complete the proof by proving our claim that
    $\mathcal{U}\setminus\OmegaCover$ is non-empty. We show that
    $\mathrm{Vol}(\OmegaCover) < \mathrm{Vol}(\mathcal{U})$ for
    the given choice of $\delta$, which in turn implies the
    claim. Note that
    $| \mathcal{I}|\leq \omega
    (\delta/\xi)^{n_{\mathcal{Z}}}$ by using the
    contrapositive of the volume counting lemma
    (Lemma~\ref{lem:pidgeonhole}) within the ball
    $B_{\mathcal{Z}}$ and utilizing the assumption that the
    context sequence $z_t$ is $\xi$-separated. Then, by the
    choice of $\delta$, 
    \begin{align}
        \mathrm{Vol}\left({\OmegaCover}\right) &\leq \VolBallnu |\mathcal{I}|
    \delta^{n_{\mathcal{U}}} \nonumber \\
    &\leq \VolBallnu\omega 
    (\delta/\xi)^{n_{\mathcal{Z}}} \delta^{n_{\mathcal{U}}}
    \nonumber \\
                           &= \omega \VolBallnu \delta^{d}
                           \xi^{-n_{\mathcal{Z}}}\leq
                           \mathrm{Vol}( \mathcal{U}). \nonumber
    \end{align}
    This completes the proof.
\end{proof}

We are now ready to prove Theorem~\ref{thm:sharp_lbd}.

    We note that there always exists dynamics $F$ for
    some state space $ \mathcal{X}$ and control input set $
    \mathcal{U}$ that satisfies the first requirement of
    $(\xi,T)$-resistance.  Specifically, there exists $F$
    that generates a sequence of states
    $\{x_t\}_{t\in\Nint{T}}$ starting from $x_0$, such that
    $\|x_{t+1} - x_i\| > \xi$ for every $i\in \Nint{t}$,
    irrespective of the control inputs selected by
    $\mathscr{A}$. For example, when $\mathcal{X}$ is an
    axis-aligned hypercube, the given choice of $\xi$
    fits a grid of $T$ points in $\mathcal{X}$ with grid spacing of
    $\sqrt{\frac{\xi}{n_{\mathcal{X}}}}$ along each dimension. In this case, $F$ can be defined
    as a nonlinear dynamics that visits each of these
    grid points only once to ensure that the resulting
    trajectory has $\xi$-separation. Since
    $\Phi$ is an identity map,  
    $\|z_{t+1} - z_i\| > \xi$ for every $i\in \Nint{t}$.
    
    Let ${\{u_t\}}_{t=0}^{T-1}$ be the sequence of
    corresponding control actions chosen by the control
    algorithm $\mathscr{A}$. Consider the sequence of
    context-control pairs $\mathcal{S} = \{ (z_t,u_t)
    \}_{t=0}^{T-1} \subset \mathbb{R}^{d}$, $d= n_{\mathcal{Z}}
    + n_{\mathcal{U}}$. Given $L_C > 0$, there exists a
    $L_C$-smooth $C$ that satisfies the requirements of a
    $(\xi,T)$-resisting problem instance of
    \eqref{prob:OC_P} for the control algorithm $
    \mathscr{A}$, and
    Assumptions~\ref{assum:nonoise}--~\ref{assum:norm_bound}
    with $\GCmax=L_C\diam( \mathcal{Z}\times \mathcal{U})$
    by Lemma~\ref{lem:whitneyC}. Thus, the
    $(\xi,T)$-resisting problem instance of
    \eqref{prob:OC_P} is well-defined.
    
    Fix $\delta ={\left({\omega \VolBallnu
    \xi^{n_{\mathcal{Z}}}}\right)}^{1/d}$, with $\omega$ as
    in Lemma~\ref{lem:disjointball}, and apply
    Lemma~\ref{lem:disjointball} with the ball
    $B_{\mathcal{Z}} = B(z_t, \delta)$ for some $t \in [T]$.
    So there exists $v_t \in \mathcal{U}$ such that
    $\mathcal{S} \cap (B(z_t, \delta) \times B(v_t,\delta))
    = \emptyset$. Consequently, $\dist((z_t,v_t),
    \mathcal{S}) \geq \delta$ for every $t\in\Nint{T}$. Using
    Lemma~\ref{lem:whitneyC}, we have 
    \[
    C(z_t,v_t) \leq - \kappa_1 L_C \dist((z_t,v_t),
    \mathcal{S})^2 \leq - \kappa_1 L_C \delta^2.
    \]
    Meanwhile, $C(z_t,u_t) = 0$ by design. Hence, the
    regret incurred by the
    control algorithm $\mathscr{A}$ is bounded from below for
    each $t \in [T]$,
    \[
    \rho_t \geq C(z_t,u_t) - C(z_t,v_t) \geq \kappa_1 L_C \delta^2 = \kappa_1 L_C {\left({\omega \VolBallnu
    \xi^{n_{\mathcal{Z}}}}\right)}^{2/d}.
    \]
    By the choice of $\xi$, 
    $\rho_T \geq \nu T^{-2/d}$ and $R_T
    \geq \nu T^{-2/d}$ for some positive constant $\nu$
    that depends on $L_C, n_{ \mathcal{Z}}$, and $n_{
    \mathcal{U}}$.\qed

\subsection{Regret analysis for
Algorithm~\ref{algo:cov_method} with $n_{\mathcal{Z}} +
n_{\mathcal{U}} = 2$}
\label{app:diff_lem}

We focus on the regret analysis for
Algorithm~\ref{algo:cov_method} with $n_{\mathcal{Z}} +
n_{\mathcal{U}} = 2$ (the case omitted by
Theorem~\ref{thm:sharp_ubd}).

\begin{prop}[\textsc{Algorithm~\ref{algo:cov_method} has
sublinear cumulative regret, even when $n_{\mathcal{Z}} +
n_{\mathcal{U}} = 2$}]\label{prop:sharp_ubd_2}
    Suppose $n_{\mathcal{Z}} + n_{\mathcal{U}} = 2$ and
    $N=0$. Then, the cumulative regret of
    Algorithm~\ref{algo:cov_method} with $\alpha=0$ and
    $\beta>0$ is sublinear, and its average regret $R_T \leq
    \widehat{M}_1 \log(T)/T + \widehat{M}_2/T$ for 
    $T\geq T_0$. Here,
    $T_0$, $\widehat{M}_1$, and $\widehat{M}_2$ are
    positive constants that depend on $L_C$, $\GCmax$,
    $\alpha$, $\beta$, and the diameters and dimensions of
    the sets $\mathcal{Z}$ and $\mathcal{U}$
\end{prop}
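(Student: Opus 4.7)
The plan is to mirror the strategy of Theorem~\ref{thm:sharp_ubd} but replace the key analytic step (Lemma~\ref{lem:ibp2}) by a borderline variant tailored to the exponent $\mu = 1$, which is precisely the case that was excluded when $n_{\mathcal{Z}} + n_{\mathcal{U}} = 2$.

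First, I would reproduce the dyadic bucketing setup from the proof of Theorem~\ref{thm:sharp_ubd} verbatim: define $\Lambda = \GCmax \, \diam(\mathcal{U})$, note $\rho_t \leq \Lambda$ for all $t$ by the Mean Value Theorem and Assumption~\ref{assum:C_Lip}, and let $n_k = |\{ t \in [T] : \rho_t \in (e^{-k}\Lambda, e^{-k+1}\Lambda]\}|$ for $k \geq 1$. This yields $\sum_{t=1}^T \rho_t \leq e\Lambda \sum_{k=1}^\infty n_k e^{-k}$, together with $\sum_k n_k \leq T$. Specializing Corollary~\ref{corr:vol_bd} to $d = 2$ gives $n_k \leq M_0 \Lambda^{-1} e^{k} =: A\, e^{k}$.

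Second, I would prove the borderline ($\mu = 1$) analogue of Lemmas~\ref{lem:ibp} and~\ref{lem:ibp2}: if $\{n_k\}_{k \geq 1}$ is a non-negative sequence with $\sum_k n_k \leq T$ and $n_k \leq A e^{k}$ for all $k$, then for $T \geq A$ one has $\sum_k n_k e^{-k} \leq A\bigl(\log(T/A) + 1\bigr)$. The proof directly imitates Lemma~\ref{lem:ibp}: pass to the step function $h$ defined by $h(t) = n_k$ on $[k,k+1)$ so that $h(t) \leq e^{t}$, and split at the cutoff $t^* = \log(T/A)$ to estimate
\[
\int_0^\infty h(t) e^{-t}\, dt \;\leq\; \int_0^{t^*} A\, dt + \int_{t^*}^\infty T e^{-t}\, dt \;=\; A\, t^* + A \;=\; A\log(T/A) + A.
\]
For $T < A$ the trivial bound $\sum_k n_k e^{-k} \leq \sum_k n_k \leq T$ suffices.

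Third, combining the two steps gives
\[
R_T \;=\; \frac{1}{T}\sum_{t=1}^T \rho_t \;\leq\; \frac{e\Lambda A}{T}\bigl(\log T - \log A + 1\bigr) \;=\; \widehat{M}_1\, \frac{\log T}{T} + \widehat{M}_2\, \frac{1}{T}
\]
for all $T \geq T_0 := \max(A,1)$, where $\widehat{M}_1, \widehat{M}_2, T_0$ inherit their dependence on $L_C$, $\GCmax$, $\alpha$, $\beta$, and the diameters and dimensions of $\mathcal{Z}$ and $\mathcal{U}$ from the constants $A$, $\Lambda$, and $M_0$. Sublinearity of the cumulative regret $\sum_{t=1}^T \rho_t = T R_T \leq \widehat{M}_1 \log T + \widehat{M}_2$ is then immediate. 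The only nontrivial step is the borderline lemma: when $\mu > 1$ the integral $\int_0^{t^*} e^{(\mu - 1)t}\, dt$ supplies the $(\mu - 1)^{-1}$ normalization that converts $t^*$ into a power of $T$, whereas at $\mu = 1$ that integral degenerates to $t^*$ itself, producing the $\log T$ factor in place of $T^{1 - 1/\mu}$; the estimate must therefore be carried out by hand rather than deduced from Lemma~\ref{lem:ibp2}, but the computation is otherwise identical.
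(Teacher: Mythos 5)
Your proposal is correct and follows essentially the same route as the paper: the same dyadic bucketing of the regrets, the same specialization of Corollary~\ref{corr:vol_bd} to $d=2$, and the same borderline $\mu=1$ variant of Lemma~\ref{lem:ibp2} with cutoff $t^*=\log(T/A)$ and the resulting $A(\log(T/A)+1)$ bound. The only cosmetic differences are that you bound $h(t)e^{-t}$ pointwise on each piece rather than first passing to $H(t)=\int_0^t h(s)\,ds$ via integration by parts as the paper does, and your intermediate claim that $h(t)\leq e^{t}$ should read $h(t)\leq A e^{t}$ (your subsequent integral already uses the correct bound $A$ on $[0,t^*]$).
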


The proof of Proposition~\ref{prop:sharp_ubd_2} is similar
to that of Theorem~\ref{thm:sharp_ubd}.  Recall the
definition of $n_k$, $t^\ast$, and $\mu$ from the
proof of Theorem~\ref{thm:sharp_ubd} and
Lemmas~\ref{lem:ibp2} and~\ref{lem:ibp}. In \eqref{eq:regret_ub_last_step}, we
had used Lemma~\ref{lem:ibp2} to upper bound a series 
$\sum_{k=1}^\infty n_k e^{-k}$. However, when proving
Proposition~\ref{prop:sharp_ubd_2}, we can not use the same
bound since $\mu=1$. 

Instead, we prove a version of Lemma~\ref{lem:ibp2} for $\mu=1$. We show that for $\{n_k\}$ satisfying $n_k \leq A e^k$ and $\sum_{k=1}^\infty n_k = T$, we have
\begin{align}
        \sum_{k=1}^\infty n_k e^{-k} 
        \leq 
        \begin{cases}
        T,  &  T < A,\\
        A\left({\ln\left(\frac{T}{A}\right) + 1}\right), & \text{otherwise}\\
    \end{cases}\label{eq:new_bound_n_k_series}
\end{align}
Similarly to the proof of Lemma~\ref{lem:ibp2}, we make the substitutions $n_k/A \rightarrow n_k$ and $T/A \rightarrow T$. Thus, we may suppose $A=1$ in the above, without loss of any generality. 

If $T < 1$, we have $\sum_{k=1}^\infty n_k e^{-k} \leq \sum_{k=1}^\infty n_k \leq T$. On the other hand,
if $T\geq 1$ then we define a step function $h(t) = n_k$ for $t \in [k,k+1)$, $k \in \mathbb{N}$ and $h(t) = 0$ for $t \in [0,1)$. As in the proof of Lemma~\ref{lem:ibp2}, it suffices to show $\int_0^\infty h(t) e^{-t} \leq \ln(T) + 1$. Set $t^\ast=\ln(T)$, $H(t) = \int_0^t h(s) ds$, and follow the proof of Lemma~\ref{lem:ibp} in the case $\mu=1$. As before, integration by parts shows $\int_0^\infty h(t) e^{-t} dt = \int_0^\infty H(t) e^{-t} dt$. Note that
\eqref{eq:main_term} simplifies as follows, since $\mu=1$, $t^\ast=\ln(T)$,
\begin{align}
        \int_0^\infty H(t) e^{-t} dt \leq \int_0^{t^*}
         dt + Te^{-t^\ast} =
        \ln(T) + 1. \nonumber
\end{align}
Finally, we complete the proof of
Proposition~\ref{prop:sharp_ubd_2} by applying
\eqref{eq:new_bound_n_k_series} to
\eqref{eq:regret_ub_last_step} for $T \geq T_0 = M_0/
(\GCmax \diam(\mathcal{U}))$ (see the proof of
Theorem~\ref{thm:sharp_ubd} for the definition of $M_0$).

\bibliographystyle{IEEEtran}
\bibliography{IEEEabrv,shortIEEE,ACC2020_refs} 
\begin{IEEEbiography}[{\includegraphics[width=1in,height=1.25in,clip,keepaspectratio]{./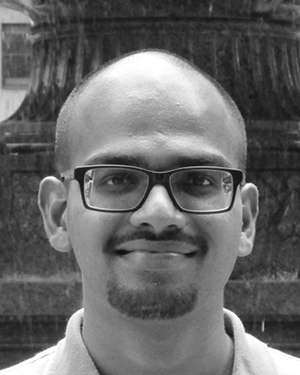}}]{Abraham Vinod}
    Abraham Vinod joined Mitsubishi Electric Research
    Laboratories as a Research Scientist in Fall 2019. He 
    received his Ph.D. degree in Electrical
    Engineering from the University of New Mexico in 2018,
    and held a postdoctoral position at the University
    of Texas at Austin. His research interests are
    in the areas of optimization, stochastic control, and
    learning. 
\end{IEEEbiography}
\begin{IEEEbiography}[{\includegraphics[width=1in,height=1.25in,clip,keepaspectratio]{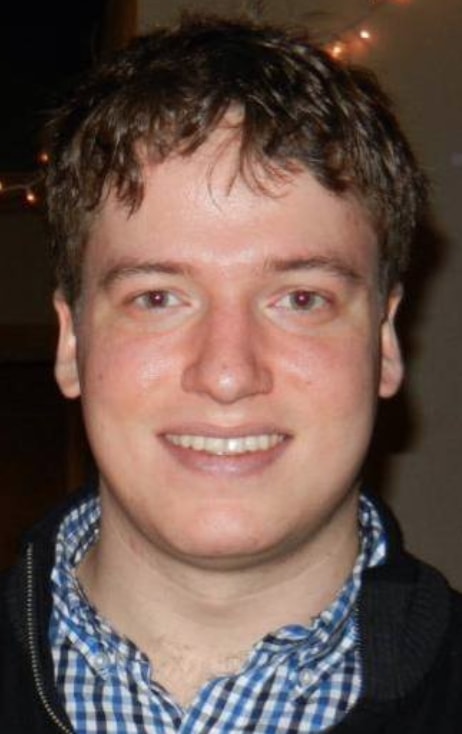}}]{Arie Israel}
Arie Israel joined the Department of Mathematics at the University of Texas at Austin as an assistant professor in Fall 2014. He received his Ph.D. degree from Princeton University in 2011, and held a postdoctoral position at New York University. His primary research focus has been on the theoretical and algorithmic foundations of extension and interpolation problems in smooth function spaces. His work draws on tools from Harmonic Analysis and PDE. Recently he has been exploring applications of his work to machine learning and control theory. 
\end{IEEEbiography}
\begin{IEEEbiography}[{\includegraphics[width=1in,height=1.25in,clip,keepaspectratio]{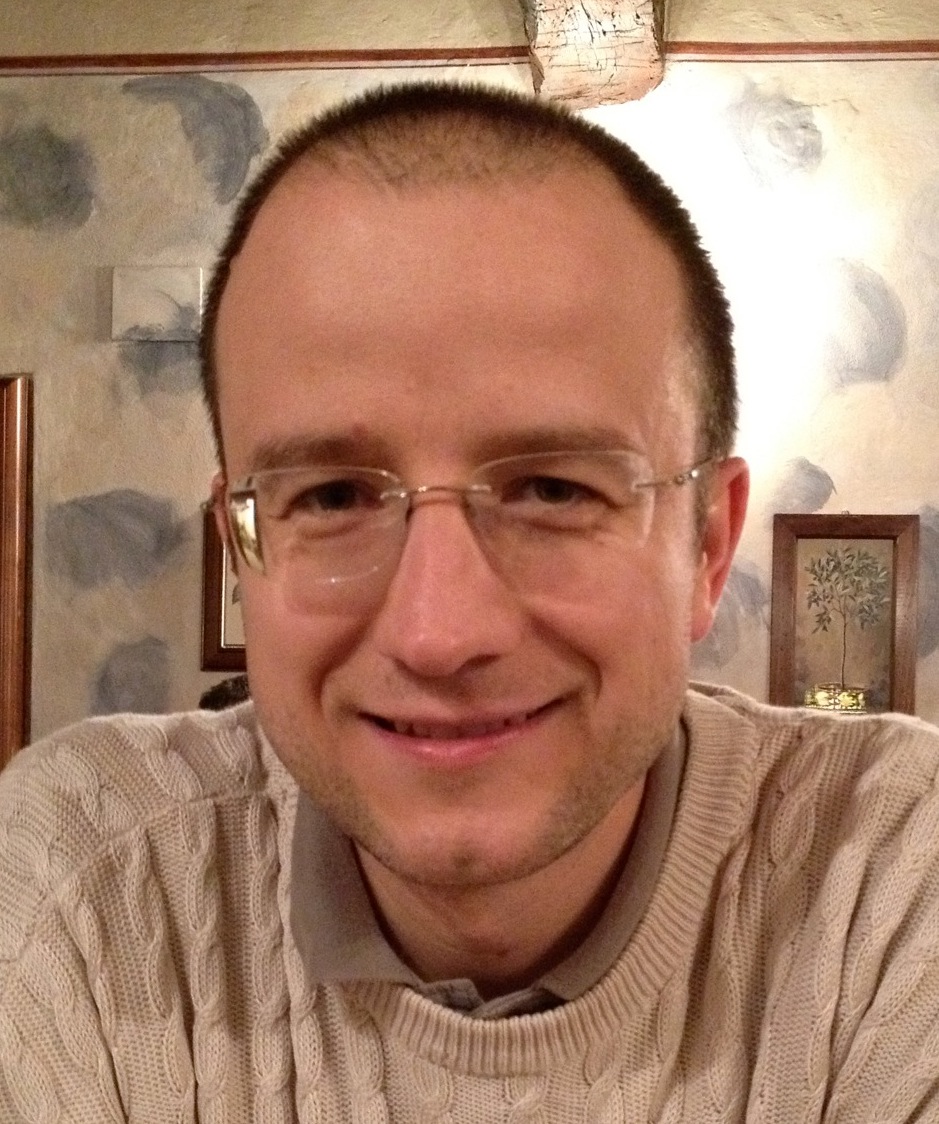}}]{Ufuk Topcu}
Ufuk Topcu is an associate professor at The University of Texas at Austin. He holds the W.A. ``Tex'' Moncrief, Jr. Professorship in Computational Engineering and Sciences. He received his Ph.D. from the University of California, Berkeley in 2008. Ufuk held a postdoctoral research position at California Institute of Technology until 2012 and was a research assistant professor at the University of Pennsylvania until 2015. His research focuses on the theoretical, algorithmic and computational aspects of design and verification of autonomous systems through novel connections between formal methods, learning theory and controls. 
\end{IEEEbiography}
\end{document}